\newcommand{\RR}{\mathbb{R}}
\newcommand{\HH}{\mathcal{H}}
\newcommand{\VV}{\mathcal{V}}
\newcommand{\rank}{\textup{rank}}
\newcommand{\convexhull}{\textup{conv}}
\newcommand{\conichull}{\textup{cone}}
\newcommand{\aff}{\textup{aff}}
\newcommand{\linearhull}{\textup{lin}}
\DeclareMathOperator {\rowsOp}{rows}
\DeclareMathOperator {\colsOp}{cols}
\newcommand \rows[1]{\rowsOp(#1)}
\newcommand \cols[1]{\colsOp(#1)}
\newcommand \setDef[2]{\{#1:#2\}}
\newcommand \inc[1]{{#1}_{\textup{inc}}}
\newcommand{\zerovec}{\mathbb{O}}
\newcommand{\onevec}{\mathbbm{1}}
\newcommand{\lin}{\textup{lin}}
\newcommand{\lineal}{\textup{lineal}}
\newcommand{\leftkernel}{\textup{leftkernel}}
\newcommand{\setdef}[2]{\{{#1}\,:\,{#2}\}}
\newcommand{\unitvec}[1]{\mathbbm{e}_{#1}}
\newtheorem{theorem}{Theorem}
\newtheorem{lemma}[theorem]{Lemma}
\newtheorem{corollary}[theorem]{Corollary}
\newtheorem{proposition}[theorem]{Proposition}
\theoremstyle{definition}
\newtheorem{example}[theorem]{Example}
\theoremstyle{remark}
\title{Which Nonnegative Matrices are Slack Matrices?}
\author{Jo{\~a}o Gouveia}
\address{CMUC, Department of Mathematics,
  University of Coimbra, 3001-454 Coimbra, Portugal}
\email{jgouveia@mat.uc.pt}
\author{Roland Grappe}
\address{Universit\'e Paris 13, Sorbonne Paris Cit\'e, LIPN, CNRS, (UMR 7030), F-93430, Villetaneuse, France} 
\email{Roland.Grappe@lipn.univ-paris13.fr}
\author{Volker Kaibel}
\address{Otto-von-Guericke Universit\"at Magdeburg, Fakut\"at f\"ur Mathematik, 39106 Magdeburg, Germany} \email{kaibel@ovgu.de}
\author{Kanstantsin Pashkovich}
\address{Dipartimento di Matematica, Universit\`a degli Studi di Padova, Via Trieste 63, 35121 Padova, Italy} 
\email{pashkovich@math.unipd.it}
\author{Richard Z. Robinson}
\address{Department of Mathematics, University of Washington, Box
  354350, Seattle, WA 98195, USA} \email{rzr@uw.edu}
\author{Rekha R. Thomas}
\address{Department of Mathematics, University of Washington, Box
  354350, Seattle, WA 98195, USA} \email{rrthomas@uw.edu}
\thanks{Gouveia was supported by by the Centre for Mathematics at the University of Coimbra and Fundac\~ao para a Ci\^encia e a
Tecnologia, through the European program COMPETE/FEDER, Kaibel by the Deutsche Forschungsgemeinschaft (KA 1616/4-1), Pashkovich by the Progetto di Eccellenza 2008-2009 of the Fondazione Cassa Risparmio di Padova e Rovigo, Robinson by
the U.S. National Science Foundation Graduate Research Fellowship (DGE-1256082),
and Thomas by the U.S. National Science Foundation grant DMS-1115293.}
\begin{document}

\begin{abstract}
In this paper we characterize the slack matrices of cones and polytopes among all nonnegative matrices.
This leads to an algorithm for deciding whether a given matrix is a slack matrix. The underlying
decision problem is equivalent to the {\em polyhedral verification problem} whose complexity
is unknown.
\end{abstract}

\maketitle

\section{Introduction}
This paper is concerned with a class of nonnegative matrices with real entries, called {\em slack matrices}, that arise naturally from polyhedral cones and polytopes. Given a polytope $P \subset \RR^n$ with vertices $v_1, \ldots, v_p$ and facet inequalities $a_j^Tx \leq \beta_j$ for $j=1,\ldots,q$, a {\em slack matrix} of $P$ is the $p \times q$ nonnegative matrix whose $(i,j)$-entry is $\beta_j - a_j^Tv_i$, the {\em slack} (distance from equality), of the $i$th vertex $v_i$ in the $j$th facet inequality $a_j^Tx \leq \beta_j$ of $P$. A similar definition holds for polyhedral cones.

Slack matrices form an interesting class of nonnegative matrices with many special properties. Most obviously, if $M$ is a slack matrix of a polytope $P$, then the zeros in $M$ record the face lattice of $P$ and hence the combinatorial structure of $P$. In its entirety, $M$ specifies an embedding of $P$ up to affine transformation. However, slack matrices carry much more  surprising information about $P$. In \cite{Yannakakis}, Yannakakis proved that the {\em nonnegative rank} of a slack matrix of $P$ is the minimum $k$ such that $P$ is the linear image of an affine slice of the positive orthant $\RR^k_+$. 
We use $\RR_+$ to denote the set of nonnegative real numbers. The nonnegative rank of a matrix $M \in \RR^{p \times q}_+$ is the smallest $k$ such there there exists vectors $a_1, \ldots, a_p \in \RR^k_+$ and $b_1, \ldots, b_q \in \RR^k_+$ such that $M_{ij} = a_i^Tb_j$.
Affine slices of positive orthants that project onto $P$ are called {\em polyhedral lifts} or {\em polyhedral extended formulations} of $P$ and the smallest $k$ such that $\RR^k_+$ admits a lift of $P$ is called the {\em (polyhedral) extension complexity} or nonnegative rank of $P$. If the extension complexity of $P$ is small (polynomial in the dimension of $P$), then usually it is possible to optimize a linear 
function over $P$ in polynomial time by optimizing an appropriate function on the lift. This is a powerful technique in optimization 
that yields polynomial time algorithms for linear optimization over complicated polytopes. There are many instances of 
$n$-dimensional polytopes with exponentially many (in $n$) facets that allow small polyhedral lifts. 

Yannakakis' result was generalized in \cite{GPT2012} to lifts of convex sets by affine slices of convex cones via {\em cone factorizations} of {\em slack operators}. Even in the larger context of cone lifts of convex sets, the case of polytopes is the simplest and the theory relies on slack matrices of polytopes and their factorizations through cones. Thus, understanding the structure of these matrices is fundamental for this theory. There are several phenomena that occur in the class of nonnegative matrices that have not yet been observed for slack matrices. For instance, an important open question is whether there exists a family of slack matrices of polytopes that exhibit an exponential gap between nonnegative rank and {\em positive semidefinite rank}. (If ${\mathcal S}^k_+$ denotes the cone of $k \times k$ real symmetric positive semidefinite matrices, then the positive semidefinite rank of a matrix $M \in \RR^{p \times q}_+$ is the smallest $k$ such that there exists matrices $A_i \in {\mathcal S}^k_+, \,\,i=1,\ldots,p$ and $B_j \in {\mathcal S}^k_+, \,\,j=1,\ldots,q$ such that $M_{ij} = \langle A_i, B_j \rangle$.) While there are simple families of matrices that exhibit even arbitrarily large gaps between nonnegative and positive semidefinite ranks \cite[Example 5]{GPT2012}, no family of slack matrices with this property is known. Such a family would be a clear witness for the power of {\em semidefinite programming} over linear programming in lifts of polytopes.

This paper was motivated by the many open questions about slack matrices which rely on understanding the structure of these matrices. We establish two main characterizations of slack matrices of polyhedral cones and polytopes.
In Section~\ref{sec:geom} we establish linear algebraic characterizations: Theorem~\ref{thm:slackconesrows} for cones and Theorem~\ref{thm:sm poly} for polytopes. In Section~\ref{sec:comb} we give combinatorial characterizations: Theorem~\ref{thm:incidences_polytopes} for polytopes and Theorem~\ref{thm:incidences_cones} for polyhedral cones. In Section~\ref{sec:algorithm} we use our characterization from Section~\ref{sec:geom} to give an algorithm for recognizing slack matrices. The computational complexity of this problem is unknown and is equivalent to the {\em polyhedral verification problem}. There are several further geometric and complexity results about slack matrices throughout the paper.

{\bf Notation}:  For a set of vectors $\mathcal{A} = \{ a_1,\ldots,a_p \}$, $\conichull(\mathcal{A}) := \{ \sum \lambda_i a_i \,:\, \lambda_i \geq 0 \}$ is the cone spanned by $\mathcal{A}$;  $\convexhull(\mathcal{A}) := \{ \sum \lambda_i a_i \,:\, 
\lambda_i \geq 0, \sum \lambda_i = 1 \}$ is the convex hull of $\mathcal{A}$; 
$\linearhull(\mathcal{A}) := \{ \sum \lambda_i a_i \,:\, \lambda_i \in \RR \}$ is the linear span of $\mathcal{A}$, and 
$\aff(\mathcal{A}) := \{ \sum \lambda_i a_i \,:\, \sum \lambda_i = 1 \}$ is the affine span of $\mathcal{A}$. The above sets can also be defined for an infinite subset $\mathcal{A} \subset \RR^n$ by taking unions over all finite subsets of $\mathcal{A}$.
For a $n \times q$ matrix $M$, we let $\rows{M}$ and $\cols{M}$ denote the sets of all rows and columns, respectively, of $M$.  We let $\mathcal{A} \cdot M$ be the set of vectors $\setdef{x^T M}{x \in \mathcal{A}}$.  For a set $K \subset \RR^n$, $\lineal(K)$ is the largest subspace contained in $K$, known as the {\em lineality space} of $K$. The dimension of a polytope $P$, $\dim(P)$ is the dimension of $\aff(P)$, the affine hull of $P$, and the dimension of a cone $K$ is the dimension of $\lin(K)$.

%%%%%%%%%%%%%%%%%%%%%%%%%%%%%%%%%%%%%%%
\section{Geometric Characterizations of Slack Matrices} \label{sec:geom}

\subsection{Slack Matrices of Polyhedral Cones} \label{subsec:cones}

Consider the polyhedral cone 
\begin{equation*}
	K=\setdef{x\in\RR^n}{x^TB\ge\zerovec} = \RR_+^p\cdot A 
\end{equation*}
in $\RR^n$ constrained by the columns of the matrix $B\in\RR^{n\times q}$ and generated by the rows of the matrix $A\in\RR^{p\times n}$. 
We call (the set of rows of) $A$ a \emph{$\mathcal{V}$-representation} and (the set of columns of) $B$ an \emph{$\mathcal{H}$-representation} of~$K$. 
The {\em slack matrix} of~$K$ with respect to the representation  $(A,B)$ is $S=AB\in\RR_+^{p\times q}$. Its $(i,j)$-entry records the ``slack'' of the $i$th generator of $K$ with respect to the $j$th inequality of $K$ in the given description of $K$.

Let $\mathcal{S}_K$ denote the set of all slack matrices of~$K$. For $S \in \mathcal{S}_K$, any matrix obtained by scaling 
the rows and columns of $S$ by positive reals is again in $\mathcal{S}_K$ since scaling the vectors in a $\mathcal{V}$ and/or $\mathcal{H}$-representation of $K$ does not change $K$. Also, $\mathcal{S}_K$ can have matrices of different sizes as adding redundant inequalities and/or generators to the representations of $K$ does not change $K$. 
From 
\begin{multline*}
	(\RR^n\cdot B) \cap\RR_+^q
	=K\cdot B
	=(\RR_+^p\cdot A)\cdot B
	=\RR_+^p\cdot S\\
	\subseteq(\RR^p\cdot S)\cap\RR_+^q
	=(\RR^p\cdot A B)\cap\RR_+^q
	\subseteq(\RR^n\cdot B)\cap\RR_+^q
\end{multline*}
we find that 
%\begin{equation*}
	$\RR_+^p\cdot S=\RR^p\cdot S\cap\RR_+^q$
%\end{equation*}
which says that the {\em cone generated by the rows of $S$} coincides with the {\em nonnegative part of the row span of $S$}.
In fact, this relation characterizes slack matrices of cones:

\begin{theorem}\label{thm:slackconesrows}
	A nonnegative matrix $M\in\RR_+^{p\times q}$ is a slack matrix of a polyhedral cone if and only if
	\begin{equation}\label{eq:RCGC}
		\RR_+^p\cdot M=\RR^p\cdot M\cap\RR_+^q,
	\end{equation}     
	or in other words, the cone spanned by the rows of $M$ coincides with the nonnegative part of the row span of $M$.  
\end{theorem}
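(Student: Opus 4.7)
The ``only if'' direction is already established by the chain of inclusions displayed immediately before the theorem statement, so the plan focuses on the ``if'' direction. Given $M \in \RR_+^{p \times q}$ with $\RR_+^p\cdot M = \RR^p \cdot M \cap \RR_+^q$, the idea is to build a cone $K$ together with a $\mathcal{V}$-representation $A$ and an $\mathcal{H}$-representation $B$ such that $M = AB$, by means of a rank factorization of $M$.

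Concretely, set $r = \rank(M)$ (the case $r=0$ is trivial) and write $M = AB$ with $A \in \RR^{p \times r}$ and $B \in \RR^{r \times q}$, where both factors have rank $r$. Define
\begin{equation*}
   K := \RR_+^p \cdot A \subseteq \RR^r,
\end{equation*}
so that the rows of $A$ are by construction a $\mathcal{V}$-representation of $K$. The crux is to show that the columns of $B$ form an $\mathcal{H}$-representation of the same cone, i.e.\ that
\begin{equation*}
   \RR_+^p\cdot A \;=\; \setdef{x \in \RR^r}{x^T B \ge \zerovec}.
\end{equation*}
Once this equality is proved, $(A,B)$ is a legitimate pair of representations for $K$ and the associated slack matrix is $AB = M$, finishing the proof.

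For the inclusion $\subseteq$, any $y \ge \zerovec$ in $\RR^p$ gives $(y^T A) B = y^T M \ge \zerovec$ by nonnegativity of $M$. The reverse inclusion $\supseteq$ is where the hypothesis is used. Given $x \in \RR^r$ with $x^T B \ge \zerovec$, use that $A$ has rank $r$ to write $x^T = y^T A$ for some $y \in \RR^p$. Then $y^T M = x^T B$ is a nonnegative vector in the row span $\RR^p \cdot M$, so by~\eqref{eq:RCGC} it equals $z^T M$ for some $z \ge \zerovec$. Hence $(y-z)^T A B = 0$; since $B$ has full row rank $r$, no nonzero vector in $\RR^r$ is killed by $B$ on the left, so $(y-z)^T A = 0$, giving $x^T = y^T A = z^T A \in \RR_+^p \cdot A$.

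The main obstacle is exactly this last step, where one must convert an arbitrary solution $x^T = y^T A$ of the describing system into one with a nonnegative combination of the generators. The hypothesis~\eqref{eq:RCGC} is used precisely to replace $y$ by a nonnegative $z$ with the same image under $M$, and the rank hypotheses coming from the rank factorization are then used to transfer this equality from $M$-images to $A$-images. Everything else is either bookkeeping or the forward inclusion already written out in the text.
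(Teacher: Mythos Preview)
Your argument is correct. Both your proof and the paper's hinge on producing a rank factorization $M=AB$ and then verifying that the rows of~$A$ and the columns of~$B$ give a $\mathcal{V}$- and $\mathcal{H}$-representation of the same cone, so the overall strategy is the same. The technical execution differs: the paper chooses a \emph{specific} factorization via an isometry $\varphi\colon \RR^p\cdot M\to\RR^n$ (with $A$ having rows $\varphi(M_i)$ and $B$ having columns $\varphi(\pi(\unitvec{j}))$), which lets the verification $\{x:x^TB\ge\zerovec\}=\varphi(\RR^p\cdot M\cap\RR_+^q)=\varphi(\RR_+^p\cdot M)=\RR_+^p\cdot A$ go through in one line by pushing~\eqref{eq:RCGC} through the bijection~$\varphi$. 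You instead take an \emph{arbitrary} rank factorization and argue directly from the full row rank of~$A$ (to lift $x$ to some $y^TA$) and the full row rank of~$B$ (to cancel $B$ and pass from $y^TM=z^TM$ to $y^TA=z^TA$). Your version is slightly more elementary in that it avoids isometries, and it is essentially the argument the paper later packages as Lemma~\ref{lem:rank factorization} (there phrased via the CCGC and a dual-cone computation). Either route works; yours has the small bonus of showing at once that \emph{every} rank factorization of~$M$ realizes it as a slack matrix.
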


\begin{proof}
	It remains to show that every matrix $M\in\RR_+^{p\times q}$ with
		$\RR_+^p\cdot M=\RR^p\cdot M\cap\RR_+^q$
	is a slack matrix of some cone. Let $n=\rank(M)$ and choose a bijective linear map
	\begin{equation*}
		\varphi\,:\,\RR^p\cdot M\rightarrow\RR^n
	\end{equation*}
	that preserves the (standard) scalar product (an isometry). Let $M_i$ denote the $i$th row of $M$ and let $A \in \RR^{p \times n}$ be the matrix whose rows are $\varphi(M_i)$. Let $\pi: \RR^q \rightarrow \RR^p \cdot M$ be an orthogonal projection and let  $B \in \RR^{n \times q}$ be the matrix whose columns 
	are $\varphi(\pi(\unitvec{1})),\dots\varphi(\pi(\unitvec{q}))$ where $\unitvec{i}$ is the $i$th standard unit vector in~$\RR^q$. Then $M=AB$ and 
	using~\eqref{eq:RCGC}, 
	\begin{multline*}
		K = \setdef{x\in\RR^n}{x^TB\ge\zerovec}
		=\setdef{\varphi(y)}{y\in\RR^p\cdot M, \,\,\varphi(y)^TB\ge\zerovec}\\
		=\varphi(\RR^p\cdot M\cap\RR_+^q)=\varphi(\RR_+^p\cdot M)=\RR_+^p\cdot A\,,
	\end{multline*}
	which shows that~$M$ is a slack matrix of the cone $K$.
\end{proof}

Recall that the dual cone of $K$ is the cone 
\begin{equation*}
	K^{\star}=\setdef{y\in\RR^n}{x^Ty\ge 0\text{ for all }x\in K}
	=\setdef{y\in\RR^n}{Ay\ge 0} = B \cdot \RR^q_+.
\end{equation*}
Hence, $S^T$ is a slack matrix of~$K^{\star}$ and we get the following result. 

\begin{proposition} \label{prop:transpose for cones}
	A nonnegative real matrix is a slack matrix of a polyhedral cone if and only if its transpose is also the slack matrix of a polyhedral cone.
\end{proposition}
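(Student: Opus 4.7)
The plan is to obtain this statement as an essentially immediate consequence of the dual cone calculation displayed right before the proposition, without needing Theorem~\ref{thm:slackconesrows} at all. The key observation is that if $K = \{x \in \RR^n : x^T B \ge \zerovec\} = \RR_+^p \cdot A$ with slack matrix $S = AB$, then the same pair $(A,B)$ exhibits $K^\star = \{y \in \RR^n : Ay \ge \zerovec\} = B \cdot \RR_+^q$ as a polyhedral cone, but with the roles of $\mathcal{V}$- and $\mathcal{H}$-representations swapped: the columns of $B$ generate $K^\star$ and the rows of $A$ give its facet-inequalities.

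From this, the first step of the proof is to write down the slack matrix of $K^\star$ in this swapped representation. With the $\mathcal{V}$-representation given by the rows of $B^T$ and the $\mathcal{H}$-representation given by the columns of $A^T$, the slack matrix is $B^T A^T = (AB)^T = S^T$. Thus, if $M$ is a slack matrix of some polyhedral cone $K$, then $M^T$ is a slack matrix of the polyhedral cone $K^\star$, proving one direction.

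The converse direction follows by applying the same argument to $M^T$: if $M^T$ is a slack matrix of some polyhedral cone $L$, then $(M^T)^T = M$ is a slack matrix of $L^\star$. So a single symmetric observation handles both directions.

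There is no genuine obstacle here; the content of the proposition is really just the bookkeeping statement that transposing $S = AB$ corresponds geometrically to passing from $K$ to $K^\star$ while swapping the two representations. The only thing worth flagging is the small verification that $K^\star$ as described is truly a polyhedral cone with the claimed representations, which is exactly what the two equalities $K^\star = \{y : Ay \ge \zerovec\} = B \cdot \RR_+^q$ already record in the excerpt.
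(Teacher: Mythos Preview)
Your proposal is correct and follows essentially the same approach as the paper: the paper's proof is exactly the one-line observation preceding the proposition, namely that from $K^\star=\setdef{y\in\RR^n}{Ay\ge 0}=B\cdot\RR_+^q$ one reads off that $S^T=B^TA^T$ is the slack matrix of $K^\star$ with respect to the representation $(B^T,A^T)$, and the converse direction is the symmetric application you describe.
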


In particular, we obtain the following consequence of Theorem~\ref{thm:slackconesrows}.
\begin{corollary}\label{cor:slackconescols}
	A nonnegative matrix $M\in\RR_+^{p\times q}$ is a slack matrix of a polyhedral cone if and only if
	\begin{equation}\label{eq:CCGC}
		M\cdot\RR_+^q=M\cdot\RR^q\cap\RR_+^p,  
	\end{equation}
	or in other words, the cone spanned by the columns of $M$ coincides with the nonnegative part of the column span of $M$.
\end{corollary}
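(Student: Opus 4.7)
The corollary will follow by combining the two preceding results in a completely formal way: Theorem~\ref{thm:slackconesrows} gives a ``row'' characterization of slack matrices, and Proposition~\ref{prop:transpose for cones} says the class of slack matrices is closed under transposition. The plan is therefore to apply Theorem~\ref{thm:slackconesrows} to $M^T$ and then transpose everything back.

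More concretely, by Proposition~\ref{prop:transpose for cones}, the matrix $M\in\RR_+^{p\times q}$ is a slack matrix of a polyhedral cone if and only if $M^T\in\RR_+^{q\times p}$ is. Applying Theorem~\ref{thm:slackconesrows} to $M^T$, the latter happens if and only if
\begin{equation*}
    \RR_+^q \cdot M^T = \RR^q \cdot M^T \cap \RR_+^p.
\end{equation*}
The left-hand side is the cone generated by the rows of $M^T$, i.e.\ by the columns of $M$, which (after the natural identification of row vectors with column vectors via transposition) is exactly $M\cdot\RR_+^q$. Similarly, the right-hand side is the nonnegative part of the row span of $M^T$, which is the nonnegative part of the column span of $M$, namely $M\cdot\RR^q\cap\RR_+^p$.

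Thus the displayed equality is literally the same as~\eqref{eq:CCGC} after transposing vectors, and the corollary follows. There is no serious obstacle here; the only thing to be careful about is bookkeeping between row and column conventions so that Theorem~\ref{thm:slackconesrows} applied to $M^T$ really does translate term by term into the claimed statement about columns of $M$.
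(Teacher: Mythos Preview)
Your proof is correct and is exactly the argument the paper has in mind: the corollary is stated immediately after Proposition~\ref{prop:transpose for cones} as a direct consequence of that proposition together with Theorem~\ref{thm:slackconesrows}, obtained by applying the row characterization to $M^T$ and transposing. There is nothing to add.
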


We say that a matrix~$M$ satisfies the \emph{row cone generating condition} (\emph{RCGC})  if~\eqref{eq:RCGC} holds and the \emph{column cone generating condition} (\emph{CCGC})  if~\eqref{eq:CCGC} holds. 

\begin{corollary}\label{cor:sm ccgc rcgc}
	For a nonnegative matrix $M\in\RR_+^{p\times q}$ the following statements are pairwise equivalent:
	\begin{itemize}
		\item $M$ is a slack matrix of a polyhedral cone.
		\item $M$ satisfies the RCGC.
		\item $M$ satisfies the CCGC.
	\end{itemize}
\end{corollary}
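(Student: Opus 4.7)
The plan is to observe that this corollary is simply the consolidation of two equivalences that have already been established in the preceding results, so no new argument is required beyond chaining them together.

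More concretely, Theorem~\ref{thm:slackconesrows} has already proven the equivalence ``$M$ is a slack matrix of a polyhedral cone $\iff$ $M$ satisfies the RCGC,'' and Corollary~\ref{cor:slackconescols} has proven the equivalence ``$M$ is a slack matrix of a polyhedral cone $\iff$ $M$ satisfies the CCGC.'' Thus I would just write a one-line proof noting that the three statements are pairwise equivalent by transitivity through the middle statement (``$M$ is a slack matrix''), which in particular yields the new-looking but derivative fact that the RCGC and the CCGC are equivalent conditions on a nonnegative matrix.

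The only thing worth being careful about is that Corollary~\ref{cor:slackconescols} is itself derived using Proposition~\ref{prop:transpose for cones}, which relies on the observation that if $S = AB$ is a slack matrix of $K$ with respect to $(A,B)$, then $S^T = B^T A^T$ is a slack matrix of the dual cone $K^{\star}$ with respect to $(B^T, A^T)$. Since that transpose symmetry has already been recorded in the text, there is no obstacle here. I would therefore present the proof as a short remark of the form ``This is an immediate consequence of Theorem~\ref{thm:slackconesrows} and Corollary~\ref{cor:slackconescols},'' with no further computation required.
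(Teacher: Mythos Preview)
Your proposal is correct and matches the paper's approach exactly: the corollary is stated without proof in the paper, as it is an immediate consequence of Theorem~\ref{thm:slackconesrows} and Corollary~\ref{cor:slackconescols}, with the paper merely remarking afterward that the equivalence of RCGC and CCGC becomes transparent via cone duality.
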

The equivalence of RCGC and CCGC for a general nonnegative matrix is not obvious. However, its proof becomes transparent via the theory of slack matrices of polyhedral cones and cone duality.

For a nonnegative $M$ with RCGC/CCGC, the proof of Theorem~\ref{thm:slackconesrows} showed how to produce a cone $K$ such that $M \in \mathcal{S}_K$. We now give another way to produce such a cone $K$ that will be useful later.
For any matrix $M \in \RR^{p \times q}$ of rank $k$, we will call a factorization of the form $M=AB$ with 
$A \in \RR^{p \times k}$, $\rank(A) = k$ and $B \in \RR^{k \times q}$, $\rank(B)=k$ a {\em rank factorization} of $M$. 

\begin{lemma} \label{lem:rank factorization}   
Let $M \in \RR^{p \times q}_+$
 be the slack matrix of a polyhedral cone and let $M=AB$ be a rank factorization of $M$. Then if $K$ is the cone generated by the rows of $A$, the columns of $B$ form an $\HH$-representation of $K$. In particular, $M \in \mathcal{S}_K$.
\end{lemma}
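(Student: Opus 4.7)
The plan is to apply Theorem~\ref{thm:slackconesrows} to $M$ and transport the resulting RCGC equality across the rank factorization $M=AB$. Since $M$ is a slack matrix of some polyhedral cone, we have $\RR_+^p\cdot M=\RR^p\cdot M\cap\RR_+^q$, and the goal is to rewrite this identity entirely in terms of $A$, $B$, and $K:=\RR_+^p\cdot A$, so that it reads as the $\HH$-representation statement $K=\{x\in\RR^k:x^TB\ge\zerovec\}$.

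First, using that $A\in\RR^{p\times k}$ has rank $k$, its rows span $\RR^k$, so $\RR^p\cdot A=\RR^k$ and consequently $\RR^p\cdot M=\RR^k\cdot B$. On the nonnegative side, the same grouping gives $\RR_+^p\cdot M=(\RR_+^p\cdot A)\cdot B=K\cdot B$. Substituting into the RCGC yields
\begin{equation*}
K\cdot B \;=\; \RR^k\cdot B\cap\RR_+^q.
\end{equation*}

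Next, I would verify that this equality is itself equivalent to $K=\{x\in\RR^k:x^TB\ge\zerovec\}$. The inclusion $K\subseteq\{x:x^TB\ge\zerovec\}$ is immediate: if $x^T=y^TA$ with $y\ge\zerovec$, then $x^TB=y^TM\ge\zerovec$. For the reverse inclusion, suppose $x^TB\ge\zerovec$. Then $x^TB\in\RR^k\cdot B\cap\RR_+^q=K\cdot B$, so $x^TB=y^TAB$ for some $y\in\RR_+^p$. At this point the second rank condition enters: because $B\in\RR^{k\times q}$ has rank $k$, its left kernel is trivial, so $(x^T-y^TA)B=\zerovec$ forces $x^T=y^TA$, i.e.\ $x\in K$. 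The ``in particular'' clause $M\in\mathcal{S}_K$ then follows directly, since $A$ is a $\VV$-representation and $B$ an $\HH$-representation of $K$ with product $AB=M$.

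The main obstacle, if one can call it that, is remembering that both full-rank conditions in a rank factorization are essential: $\rank A=k$ to identify $\RR^p\cdot A$ with all of $\RR^k$ (so that the RCGC translates cleanly into an identity in $\RR^k$), and $\rank B=k$ to kill the left kernel in the final cancellation step. Dropping either condition would leave the argument incomplete---without a trivial left kernel for $B$, for instance, one would only recover $x^T-y^TA$ up to an element of that kernel. Otherwise the proof is a direct substitution into Theorem~\ref{thm:slackconesrows}.
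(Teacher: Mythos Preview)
Your proof is correct, but it takes a route dual to the paper's. The paper uses the CCGC rather than the RCGC: it shows $\widetilde K:=\{x:x^TB\ge\zerovec\}\subseteq K$ by proving that every linear functional $\ell$ nonnegative on $K$ is a nonnegative combination of the columns of $B$ (so $\ell$ is automatically nonnegative on $\widetilde K$, and a separation argument finishes). Concretely, from $A\ell\in A\cdot\RR^k\cap\RR_+^p=M\cdot\RR_+^q$ they write $A\ell=A(B\lambda)$ with $\lambda\ge\zerovec$ and cancel $A$ using the linear independence of its columns. You instead substitute the RCGC through the factorization to get $K\cdot B=\RR^k\cdot B\cap\RR_+^q$, and cancel $B$ on the right using the linear independence of its \emph{rows}. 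Your argument is slightly more direct, as it produces the membership $x\in K$ explicitly and avoids the implicit appeal to separation/biduality; the paper's argument, on the other hand, makes visible that the columns of $B$ actually generate the dual cone $K^\star$, which is a bit more information.
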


\begin{proof}
Let $K=\conichull (\{a_1,\ldots,a_p\})$ and $\widetilde{K}= \{ x \in \RR^k \,:\, x^Tb^j \geq 0, \,\,j=1,\ldots,q \}$ where $a_i$ is the $i$th row of $A$ and $b^j$ is the $j$th column of $B$.  We need to show that $K=\widetilde{K}$.
Since $M$ is a nonnegative matrix, $K \subseteq \widetilde{K}$. To prove the reverse inclusion we will argue that every linear function that is nonnegative on $K$ is also nonnegative on $\widetilde{K}$.
Let $k=\rank(M)$. Since $M$ has the CCGC and $A \cdot \RR^k = M \cdot \RR^q$ (since $\rank(A)=k$), we have that 
$M \cdot \RR^q_+ =  A \cdot \RR^k \cap \RR^p_+$.
 Suppose $L(x) = \ell_1x_1 + \ldots + \ell_k x_k  \geq 0$ for all $x \in K$.  Then the evaluation vector $(L(a_1),\ldots,L(a_p))^T = A \ell$ lies in $M \cdot \RR^q_+$.  Since the columns of $M$ are $Ab^j$ for $j=1,\ldots,q$, there exists $\lambda_j \geq 0$ such that
$A \ell = \sum_{j=1}^{q} \lambda_j (Ab^j) = A \sum_{j=1}^{q} \lambda_j b^j$. This implies that $\ell = \sum_{j=1}^{q} \lambda_j b^j$ since the columns of $A$ are linearly independent. Therefore, $\ell$ is a nonnegative linear combination of the $b^j$'s and $L(x) \geq 0$ is valid on $\widetilde{K}$.
\end{proof}

\subsection{Slack Matrices of Polytopes} \label{subsec:polytopes}

We now investigate the slack matrices of polytopes. Let $V\in\RR^{p\times n}$ and 
$P=\convexhull(\rows{V})$ be the polytope in $\RR^n$ that is the convex hull of the rows of $V$. Suppose also that 
$P=\setdef{x\in\RR^n}{Wx\le w}$ with $W\in\RR^{q\times n}$ and $w\in\RR^q$. To avoid unnecessary inconveniences, we assume that $\dim(P)\ge 1$. We call (the set of rows of) $V$ a $\mathcal{V}$-representation and (the set of columns of) $[w,-W]^T$ an $\mathcal{H}$-representation of~$P$. 
The {\em slack matrix} of~$P$ with respect to the representation  $(V,W,w)$ is then 
\begin{equation}\label{eq:defslackpoly}
	S=[\onevec,V]
		\cdot
		[w,-W]^T
	\in\RR_+^{p\times q}\,.
\end{equation}

We denote the set of all slack matrices of~$P$ by $\mathcal{S}_P$. Clearly, scaling the columns of a slack matrix of~$P$ by positive scalars yields another slack matrix of~$P$, because scaling the vectors in an $\mathcal{H}$-representation of $P$ yields another $\mathcal{H}$-representation of $P$. However, we cannot scale the 
rows of a matrix $S \in \mathcal{S}_P$ and still stay in $\mathcal{S}_P$.

The matrix~$S$  is also the slack matrix of the  {\em homogenization cone} of $P$:
\begin{equation}
	P^h=\RR_+^p\cdot [\onevec,V]=\setdef{(x_0,x)\in\RR\times\RR^n}{Wx\le x_0w}
\end{equation}
with respect to the representation 
$([\onevec,V],
{\tiny
	\left[
	\begin{array}{c}
		w^T \\ -W^T 
	\end{array}
	\right]
}
)$. 
Since $\dim(P)\ge 1$, there is some $c\in\RR^n$ with 
\begin{equation*}
	\max\setdef{c^Tx}{x\in P}-\min\setdef{c^Tx}{x\in P}=1\,,
\end{equation*}
and hence, due to LP-duality, we get
\begin{equation}\label{eq:zerovec1incone}
	(1, \zerovec^T)\in\RR^q\cdot(w,W)\, \textup{ and so also, } (1, \zerovec^T)\in\RR^q\cdot(w,-W).
\end{equation}
From~\eqref{eq:defslackpoly} and~\eqref{eq:zerovec1incone}
we get that $\onevec\in S\cdot\RR^q$, the column span of $S$. These properties characterize the slack matrices of polytopes of dimension at least one:

\begin{theorem}\label{thm:sm poly}
	A matrix $M\in\RR_+^{p\times q}$ with $\rank(M)\ge 2$ is a slack matrix of a polytope if and only if~$M$ is a slack matrix of a polyhedral cone and $\onevec\in M\cdot\RR^q$.
\end{theorem}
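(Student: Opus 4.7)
The forward direction is already essentially present in the discussion preceding the theorem: the slack matrix of a polytope $P$ is also the slack matrix of the homogenization cone $P^h$, and LP-duality applied to a linear functional with range of length $1$ on $P$ yields $\onevec\in M\cdot\RR^q$. So the substance of the proof is the converse: given a nonnegative $M\in\RR_+^{p\times q}$ of rank $k\geq 2$ which is a cone slack matrix and satisfies $\onevec\in M\cdot\RR^q$, construct a polytope whose slack matrix is $M$.

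The plan is to exploit a rank factorization $M=AB$ with $A\in\RR^{p\times k}$ and $B\in\RR^{k\times q}$. The hypothesis $\onevec\in M\cdot\RR^q$ furnishes some $\lambda\in\RR^q$ with $M\lambda=\onevec$, and setting $c=B\lambda\in\RR^k$ gives $Ac=\onevec$. Geometrically this says that every row of $A$ lies on the affine hyperplane $H=\setdef{x\in\RR^k}{\langle x,c\rangle=1}$. Since $\rank(A)=k\geq 2$, the rows linearly span $\RR^k$ but all lie in $H$, so their affine hull is exactly $H$, and in particular $\convexhull(\rows{A})$ has dimension $k-1\geq 1$.

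Next I would apply an invertible linear map $\varphi$ of $\RR^k$ that sends the functional $\langle\cdot,c\rangle$ to the first coordinate functional. In matrix form, replace $A$ by $A'=A\Phi^T=[\onevec,V]$ for some $V\in\RR^{p\times(k-1)}$, and simultaneously $B$ by $B'=\Phi^{-T}B$, so that we still have the rank factorization $M=[\onevec,V]\cdot B'$. Write $B'=[w,-W]^T$ by splitting off its first row from the rest, giving $w\in\RR^q$ and $W\in\RR^{q\times(k-1)}$. Since $M$ is a slack matrix of some cone, Lemma~\ref{lem:rank factorization} applies to this rank factorization and tells us that the cone $K'=\conichull(\rows{[\onevec,V]})$ has the columns of $B'$ as an $\HH$-representation.

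Finally I would recognize $K'$ as the homogenization cone of the polytope $P:=\convexhull(\rows{V})\subseteq\RR^{k-1}$, and read off from the $\HH$-representation that $K'=\setdef{(x_0,x)}{Wx\leq x_0w}$, whence $P=\setdef{x\in\RR^{k-1}}{Wx\leq w}$. This means $(V,W,w)$ is a valid $\VV/\HH$-pair of representations of $P$, and the associated slack matrix is $[\onevec,V]\cdot[w,-W]^T=[\onevec,V]\cdot B'=M$, so $M\in\mathcal{S}_P$. The step I expect to be the most delicate is verifying that the construction really produces a bona fide polytope of dimension at least one (hence fitting the standing assumption $\dim(P)\geq 1$) — this is exactly where the rank hypothesis $\rank(M)\geq 2$ is used, ensuring that after passing to the hyperplane $H$ we still have room for a positive-dimensional polytope rather than a single point.
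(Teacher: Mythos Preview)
Your proof is correct, and the core idea coincides with the paper's: from $\onevec\in M\cdot\RR^q$ obtain $c$ with $Ac=\onevec$ and slice the cone by the hyperplane $\{x:\langle x,c\rangle=1\}$ to produce the polytope. The executions differ slightly. You insist on a rank factorization, apply a linear coordinate change to put it into the canonical form $[\onevec,V]\cdot[w,-W]^T$, and then invoke Lemma~\ref{lem:rank factorization} to certify that the columns of $B'$ really give an $\HH$-representation of $K'=\conichull(\rows{[\onevec,V]})$. The paper instead works with an arbitrary cone representation $(A,B)$ and establishes directly, via a short chain of set equalities, that $\convexhull(\rows{A})=\setdef{x}{x^TB\ge\zerovec,\ x^Tc=1}$, then passes to $\RR^{n-1}$ by an isometry. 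Your route is a bit more explicit about the resulting $(V,W,w)$-data and reuses the machinery of Lemma~\ref{lem:rank factorization}; the paper's is shorter and self-contained. Both handle the $\rank(M)\ge 2$ hypothesis in the same way, ensuring the hyperplane slice is a polytope of dimension at least one.
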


\begin{proof}
	It suffices to show that a matrix $M\in\RR_+^{p\times q}$  with $\onevec\in M\cdot\RR^q$ that is the slack matrix of some cone~$K\subseteq \RR^n$ with respect to a representation $(A,B)$ is also the slack matrix of some polytope. To construct such a polytope, choose any $\mu\in\RR^q$ such that $\onevec=M\mu$ and define $c=B\mu$. Then $Ac=\onevec$ since $M=AB$. Define $P=\convexhull(\rows{A})$.  Then we have:
	\begin{multline*}
		P= \setdef{y^TA}{y^T\onevec=1,y\in\RR_+^p}
			=\setdef{y^TA}{y^TAc=1,y\in\RR_+^p}  \\
			=\setdef{x\in K}{x^Tc=1}
			=\setdef{x\in\RR^n}{x^TB\ge\zerovec,x^Tc=1}\, .
	\end{multline*}
Mapping the hyperplane in $\RR^n$ defined by $x^Tc=1$ isometrically to $\RR^{n-1}$ (as in the proof of Theorem~\ref{thm:slackconesrows}), we find that~$M$ is a slack matrix of the resulting image of~$P$.
\end{proof}

\begin{corollary}\label{cor:sm poly}
	A matrix $M\in\RR_+^{p\times q}$ with $\rank(M)\ge 2$ is a slack matrix of some polytope if and only if it satisfies the RCGC (or, equivalently, the CCGC) and $\onevec\in M\cdot\RR^q$ holds.
\end{corollary}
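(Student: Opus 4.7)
The plan is to obtain Corollary~\ref{cor:sm poly} as an immediate combination of two results already established in the excerpt, rather than reproving anything from scratch.

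First I would invoke Theorem~\ref{thm:sm poly}, which gives that for $M\in\RR_+^{p\times q}$ with $\rank(M)\ge 2$, being a slack matrix of some polytope is equivalent to the conjunction of two conditions: (i) $M$ is a slack matrix of some polyhedral cone, and (ii) $\onevec\in M\cdot\RR^q$. So the corollary reduces to rewriting condition (i) in terms of the row/column cone generating conditions.

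Next I would apply Corollary~\ref{cor:sm ccgc rcgc}, which asserts that the three statements ``$M$ is the slack matrix of a polyhedral cone'', ``$M$ satisfies the RCGC'', and ``$M$ satisfies the CCGC'' are pairwise equivalent for any nonnegative matrix $M$. Substituting either of the cone-generating conditions for (i) therefore yields exactly the statement of the corollary, including the parenthetical equivalence of RCGC and CCGC.

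Since each implication is just a citation, there is no genuine obstacle; the only thing to note is that the hypothesis $\rank(M)\ge 2$ is inherited from Theorem~\ref{thm:sm poly} and is needed only there (to exclude degenerate $0$- or $1$-rank cases where the homogenization-cone argument would not produce a polytope of positive dimension). Thus the proof reduces to one sentence citing the two results.
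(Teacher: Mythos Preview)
Your proposal is correct and matches the paper's approach exactly: the corollary is stated without proof because it is immediate from combining Theorem~\ref{thm:sm poly} with Corollary~\ref{cor:sm ccgc rcgc}, which is precisely what you do.
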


Theorem~\ref{thm:slackconesrows} geometrically characterizes the slack matrices of cones as those matrices~$M \in \RR^{p \times q}_+$ that satisfy
\begin{equation}\label{eq:geometric RCGC}
	\conichull(\rows{M})=\lin(\rows{M})\cap\RR_+^q\, .
\end{equation}
There is an analogous geometric characterization of slack matrices of polytopes.

\begin{corollary}\label{cor:sm poly aff}
  A matrix $M\in\RR_+^{p\times q}$ with $\rank(M)\ge 2$ is a slack matrix of some polytope if and only if 
  \begin{equation}\label{eq:sm poly aff criterion}
    \convexhull(\rows{M})=\aff(\rows{M})\cap \RR^{q}_{+}\, .
  \end{equation}
\end{corollary}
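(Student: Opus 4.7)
The plan is to deduce this from Corollary~\ref{cor:sm poly}, which asserts that a matrix $M$ (with $\rank(M)\ge 2$) is a slack matrix of a polytope exactly when it satisfies the RCGC and $\onevec\in M\cdot\RR^q$. The latter condition unpacks to the existence of $\mu\in\RR^q$ with $M\mu=\onevec$, i.e.\ every row of $M$ lies on the affine hyperplane $H:=\setdef{y\in\RR^q}{y\cdot\mu=1}$. My goal is to show that the affine identity \eqref{eq:sm poly aff criterion} is equivalent to the conjunction of the RCGC and $\onevec\in M\cdot\RR^q$.

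For the forward direction, given such a $\mu$, a direct check shows $\aff(\rows{M})=\lin(\rows{M})\cap H$ and $\convexhull(\rows{M})=\conichull(\rows{M})\cap H$, because every affine combination of vectors in $H$ has $\mu$-value $1$. Intersecting the RCGC identity $\conichull(\rows{M})=\lin(\rows{M})\cap\RR^q_+$ with $H$ then yields \eqref{eq:sm poly aff criterion} immediately.

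For the backward direction, I would first establish $\onevec\in M\cdot\RR^q$, equivalently that $0\notin\aff(\rows{M})$. If $0\in\aff(\rows{M})$ then $\aff(\rows{M})=\lin(\rows{M})$ and the hypothesis gives $\convexhull(\rows{M})=\lin(\rows{M})\cap\RR^q_+$, a set closed under multiplication by any nonnegative scalar. Since $\rank(M)\ge 2$ there is a nonzero row $M_i$, whose positive multiples would then all lie in $\convexhull(\rows{M})$, contradicting the boundedness of a convex hull of finitely many points. Hence $0\notin\aff(\rows{M})$ and some $\mu$ with $M\mu=\onevec$ exists.

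It remains to verify the RCGC, where the nontrivial inclusion $\lin(\rows{M})\cap\RR^q_+\subseteq\conichull(\rows{M})$ again rests on boundedness. I would show every nonzero $y\in\lin(\rows{M})\cap\RR^q_+$ satisfies $y\cdot\mu>0$: if $y\cdot\mu=0$, then $M_i+ty$ lies in $\lin(\rows{M})\cap H\cap\RR^q_+=\aff(\rows{M})\cap\RR^q_+=\convexhull(\rows{M})$ for every $t\ge 0$, an unbounded ray in a bounded set; if $y\cdot\mu<0$, the rescaling $w:=M_i+y/|y\cdot\mu|$ produces a nonzero element of $\lin(\rows{M})\cap\RR^q_+$ with $w\cdot\mu=0$, reducing to the previous case. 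Once $y\cdot\mu>0$, the vector $y/(y\cdot\mu)$ lies in $\aff(\rows{M})\cap\RR^q_+=\convexhull(\rows{M})$, so $y$ is itself a conic combination of rows. The main obstacle is precisely this last step: extracting the conic RCGC from a hypothesis phrased purely in affine/convex terms. The crux is that the boundedness of a polytope (as opposed to a cone) is what rules out rogue ``lineality-like'' directions in $\lin(\rows{M})\cap\RR^q_+$ and thereby forces both the existence of $\mu$ and the refinement of membership in $\lin(\rows{M})\cap\RR^q_+$ to a conic combination of rows.
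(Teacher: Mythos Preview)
Your proof is correct. The forward direction is essentially the paper's argument: intersect the RCGC identity with the affine hyperplane $H=\{y:y\cdot\mu=1\}$ and use $\aff(\rows{M})=\lin(\rows{M})\cap H$ and $\convexhull(\rows{M})=\conichull(\rows{M})\cap H$.

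For the backward direction, however, your route differs from the paper's. The paper does not go back through Corollary~\ref{cor:sm poly}; instead it takes an isometry $\varphi$ from the $d$-dimensional affine subspace $\aff(\rows{M})$ onto $\RR^d$ and observes directly (in the spirit of the proof of Theorem~\ref{thm:slackconesrows}) that $M$ is a slack matrix of the $\varphi$-image of the polytope $\aff(\rows{M})\cap\RR^q_+$. Your approach instead verifies the two hypotheses of Corollary~\ref{cor:sm poly} by hand: you first exclude $0\in\aff(\rows{M})$ (and hence produce $\mu$) using boundedness of the convex hull, and then recover the RCGC via the case split on the sign of $y\cdot\mu$, again exploiting boundedness. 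This is longer but more transparent: it isolates exactly where boundedness of a polytope (as opposed to a cone) is doing the work, and it keeps the argument purely in $\RR^q$ without invoking an auxiliary isometry. The paper's version is terser and more geometric, but leaves the verification that the isometry really yields a slack matrix as an implicit analogy with the earlier proof.
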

\begin{proof}
First, suppose that $M$ is a slack matrix of some polytope.  Then by Corollary~\ref{cor:sm poly}, we have that $M$ satisfies~\eqref{eq:geometric RCGC} and $\onevec\in M\cdot\RR^q$.  Hence, there exists some $c \in \RR^q$ such that $Mc = \onevec$ and the affine hyperplane $L=\setdef{x\in \RR^q}{x^T c = 1}$ contains the rows of $M$.  Intersecting $L$ with both sides of~\eqref{eq:geometric RCGC}, we obtain~\eqref{eq:sm poly aff criterion}.
    
For the reverse implication,
 let $M\in\RR_+^{p\times q}$ be a nonnegative matrix satisfying~\eqref{eq:sm poly aff criterion}. 
	Using any isometry~$\varphi$ between the $d$-dimensional affine subspace $\aff(\rows{M})$ and~$\RR^d$, we find that~$M$ is a slack matrix of the $\varphi$-image of the polytope defined in~\eqref{eq:sm poly aff criterion}.
% Now if $M$ satisfies~\eqref{eq:sm poly aff criterion}, then $\aff(\rows{M})\cap \RR^{q}_{+}$ is bounded.  Hence, there exists a vector $c\in \RR^q$ with $c>0$ such that $L=\setdef{x\in \RR^q}{c^T x = 1}$ contains $\rows{M}$.  Thus, $\onevec\in M\cdot\RR^q$.  Now by~\eqref{eq:sm poly aff criterion}, we have that
% \begin{equation*}
% \conichull(\rows{M})\cap L=\lin(\rows{M})\cap\RR_+^q \cap L\,.
% \end{equation*}  
% Since $c>0$, cones in $\RR_+^q$ are determined by their intersections with $L$.  Hence, we have equation~\eqref{eq:geometric RCGC}.
\end{proof}

We have seen above that every slack matrix of a polytope~$P$ has the all-ones vector in its column span and is also a slack matrix of the homogenization cone $P^h$ of~$P$. 
The next example shows that not all slack matrices of $P^h$ are slack matrices of~$P$, in fact, this does not even hold for the slack matrices of~$P^h$ that have the all-ones vector in their column span.

\begin{example}\label{ex:square slack}
Let $P$ be the square $[-1,1]^2$.  The matrix
\[ M = \left( \begin{array}{cccc}
\frac{4}{3} & 0 & \frac{4}{3} & 0 \\
2 & 0 & 0 & 2 \\
0 & 2 & 2 & 0 \\
0 & 4 & 0 & 4 \end{array} \right)
= \left( \begin{array}{ccc}
\frac{2}{3} & \frac{2}{3} & \frac{2}{3} \\
1 & 1 & -1 \\
1 & -1 & 1 \\
2 & -2 & -2 \end{array} \right)
\left( \begin{array}{cccc}
1 & 1 & 1 & 1 \\
1 & -1 & 0 & 0 \\
0 & 0 & 1 & -1 \end{array} \right) \]
is in $\mathcal{S}_{P^h}$ and $\onevec$ is in the column span of $M$.  It is clear, however, that $M$ is not in $\mathcal{S}_P$ since each facet of $[-1,1]^2$ is equidistant from the two vertices not on the facet.  On the other hand, since $M$ has the RCGC/CCGC and $\onevec$ is in its column span, it is the slack matrix of some other polytope $Q$. To obtain it, write a new rank factorization of $M$ (note that $\rank(M)=3$) so that the first factor contains the all ones vector as its first column as follows:
\[ M = \left( \begin{array}{ccc}
\frac{2}{3} & \frac{2}{3} & \frac{2}{3} \\
1 & 1 & -1 \\
1 & -1 & 1 \\
2 & -2 & -2 \end{array} \right)
UU^{-1}
\left( \begin{array}{cccc}
1 & 1 & 1 & 1 \\
1 & -1 & 0 & 0 \\
0 & 0 & 1 & -1 \end{array} \right), \,\,\,\,\,\,\,
U = \left( \begin{array}{ccc}
1 & 0 & 0 \\
1/4 & 1& 0\\
1/4 & 0 & 1 \end{array} \right)
 \]
 to get
 \[ M = \left( \begin{array}{cccc}
\frac{4}{3} & 0 & \frac{4}{3} & 0 \\
2 & 0 & 0 & 2 \\
0 & 2 & 2 & 0 \\
0 & 4 & 0 & 4 \end{array} \right)
= \left( \begin{array}{ccc}
1 & \frac{2}{3} & \frac{2}{3} \\
1 & 1 & -1 \\
1 & -1 & 1 \\
1 & -2 & -2 \end{array} \right)
\left( \begin{array}{cccc}
1 & 1 & 1 & 1 \\
3/4 & -5/4 & -1/4 & -1/4 \\
-1/4 & -1/4 & 3/4 & -5/4 \end{array} \right). \]
By Lemma~\ref{lem:rank factorization}, $M$ is the slack matrix of the cone with $\VV$-representation the rows of the first factor and $\HH$-representation the columns of the second factor. Assuming the coordinates of this three-dimensional cone are $x_0,x_1,x_2$, and slicing the cone with the hyperplane $\{ ((x_0,x_1,x_2) \,:\, x_0 = 1 \}$ gives a polytope 
$Q$ with vertices $(2/3,2/3), (1,-1), (-1,1),(-2,-2)$ and $\HH$-representation given by the columns of the second factor. 
Then $M \in \mathcal{S}_Q$.
\end{example}

\subsection{Further Results on Slack Matrices of Cones and Polytopes}\label{subsec:further results}

In this section we derive some more insight into the geometric relations between cones, polytopes, and their slack matrices that will be useful in later parts of the paper. We return to the setup used earlier: $K$ is assumed to be a cone and $S$ the slack matrix of~$K$ with respect to its representation $(A,B)$ where $A \in \RR^{p \times n}$ and $B \in \RR^{n \times q}$. 

First, we will show that every slack matrix of a cone is the slack matrix of some pointed cone.  Recall that we use $\lin(K)$ to denote the linear hull of~$K$ and $\lineal(K)$ to denote the lineality space of~$K$.  
Then we have	$\lin(K)=\RR^p\cdot A$ and $\lineal(K)=\leftkernel(B)$. A cone $K$ is {\em pointed} if  $\lineal(K) = \{\zerovec\}$.
Define
\begin{equation*}
	L:=\lin(K)\cap\lineal(K)^{\perp}=(\RR^p\cdot A)\cap(B\cdot\RR^q)\,.%=\lin(K^{\star})\cap\lineal(K^{\star})^{\perp}
\end{equation*}
Then we have
\begin{equation*}
	\lin(K)=L+\lineal(K)
\end{equation*}
(where the summands are orthogonal to each other)
and
\begin{equation*}
	K=(K\cap L)+\lineal(K)\,,
\end{equation*}
where $K\cap L\subseteq L$ is a pointed (i.e., having trivial lineality space) cone with $\dim(K\cap L)=\dim(L)$.
Denoting by $A'\in\RR^{p\times n}$ the matrix obtained from~$A$ by orthogonal projections of all rows to~$L$, we have 
\begin{equation*}
	K\cap L=\RR_+^p\cdot A'
	\qquad\text{and}\qquad
	S=A'B\,.
\end{equation*}
By mapping~$L$ isometrically to $\RR^{\dim(L)}$, we thus find that~$S$ is a slack matrix of the pointed cone that is the image of~$K\cap L$ under that map and we get the following:

\begin{lemma}\label{lem:sm cone iff sm pointed cone}
	A matrix is a slack matrix of a polyhedral cone if and only if it is a slack matrix of some pointed polyhedral cone.
\end{lemma}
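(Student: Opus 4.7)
One direction of the biconditional is immediate: a pointed polyhedral cone is itself a polyhedral cone, so any slack matrix of a pointed cone is automatically a slack matrix of some cone. The real content is the converse, and my plan is to carry out rigorously the construction already sketched in the paragraph preceding the lemma.

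Suppose $S = AB$ is a slack matrix of a polyhedral cone $K \subseteq \RR^n$ with $\mathcal{V}$-representation $A \in \RR^{p \times n}$ and $\mathcal{H}$-representation $B \in \RR^{n \times q}$. First, I would set $L := \lin(K) \cap \lineal(K)^\perp$ and invoke the orthogonal decomposition $\lin(K) = L + \lineal(K)$, which yields $K = (K \cap L) + \lineal(K)$ with $K \cap L \subseteq L$ a pointed cone that spans $L$. Let $A'$ denote the matrix obtained from $A$ by projecting each row orthogonally onto $L$; a short check based on the decomposition gives $K \cap L = \RR_+^p \cdot A'$.

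The key algebraic step is the identity $A'B = AB$, which guarantees that $S$ is still the slack matrix of the new pair $(A', B)$. This follows at once from the identity $\lineal(K) = \leftkernel(B)$ recorded just above the lemma statement: every row of $A - A'$ lies in $\lineal(K)$ by construction, so it is annihilated on the right by $B$.

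Finally, I would fix a linear isometry $\psi : L \to \RR^{\dim L}$ and apply it rowwise to $A'$ to obtain $\widetilde A \in \RR^{p \times \dim L}$. Projecting the columns of $B$ orthogonally onto $L$ and then applying $\psi$ coordinatewise produces $\widetilde B$; since the rows of $A'$ already lie in $L$, this column projection leaves the product $A'B$ unchanged, so $S = \widetilde A \widetilde B$. The pair $(\widetilde A, \widetilde B)$ is then a representation of the pointed cone $\psi(K \cap L) \subseteq \RR^{\dim L}$, exhibiting $S$ as one of its slack matrices. The main obstacle, such as it is, lies in verifying carefully that the two orthogonal projections (of the rows of $A$ and then of the columns of $B$) do not disturb the product $AB$; everything else is routine bookkeeping built on top of the orthogonal decomposition of $\lin(K)$.
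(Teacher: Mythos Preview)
Your proposal is correct and follows essentially the same approach as the paper: the paper's proof \emph{is} the construction sketched in the paragraphs preceding the lemma, and you have simply fleshed it out. The only place you go beyond the paper is in the final isometry step, where you explicitly project the columns of $B$ onto $L$ before applying $\psi$ and verify that this leaves the product unchanged; the paper simply asserts that mapping $L$ isometrically to $\RR^{\dim L}$ exhibits $S$ as a slack matrix of the image of $K\cap L$, leaving that bookkeeping implicit.
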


If the cone~$K$ is pointed, then for every zero-row of~$S=AB$ the corresponding row of~$A$ is a zero-row as well. Hence, removing any zero-row from~$S$ results in another slack matrix of~$K$. A similar statement clearly holds for adding zero-rows.

\begin{lemma}\label{lem:zero rows sm pointed cone}
	If a matrix~$S$ is a slack matrix of a pointed polyhedral cone~$K$ then every matrix obtained from~$S$  by adding or removing zero-rows is a slack matrix of~$K$ as well.
\end{lemma}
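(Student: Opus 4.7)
The plan is to justify the two sentences already appearing just before the statement. The argument splits into a pointedness observation for removals, and a trivial observation about adjoining redundant generators for additions.

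Set up as in the paragraph preceding the lemma: fix a representation $(A,B)$ of~$K$ with $A\in\RR^{p\times n}$, $B\in\RR^{n\times q}$, and $S=AB$. Write $a_i^T$ for the $i$th row of $A$, so that the $i$th row of $S$ is $a_i^T B$. The core step is: if row $i$ of $S$ is zero, i.e.\ $a_i^T B = \zerovec^T$, then $a_i$ lies in $\lineal(K)$, hence $a_i = \zerovec$ since $K$ is pointed. Indeed, $a_i \in K$ as it is a generator, so $a_i^T B \geq \zerovec$, and the equality $a_i^T B = \zerovec^T$ also gives $(-a_i)^T B \geq \zerovec^T$, i.e.\ $-a_i \in K$. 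Thus $a_i \in K \cap (-K) = \lineal(K) = \{\zerovec\}$.

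Now handle the two directions. For removing a zero row, let $A'$ be the matrix obtained from $A$ by deleting the $i$th row; by the previous paragraph this row is zero, so $\RR_+^{p-1}\cdot A' = \RR_+^p\cdot A = K$. Hence $(A',B)$ is again a representation of $K$, and the corresponding slack matrix $A'B$ is exactly $S$ with its $i$th row removed. For adding a zero row, let $A''$ be the matrix obtained from $A$ by inserting a zero row in some position; then clearly $\RR_+^{p+1}\cdot A'' = \RR_+^p\cdot A = K$, so $(A'',B)$ is a representation of $K$, and $A''B$ is $S$ with a zero row inserted in the same position. Iterating either operation finitely many times yields the lemma.

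The main obstacle is really only the verification that pointedness is used essentially in the removal direction: without pointedness one could have a nonzero generator $a_i$ in $\lineal(K)$ producing a zero row of $S$, and then deleting that row of $S$ would also require deleting $a_i$ from $A$, which could change $K$. The additions direction is automatic and does not need pointedness.
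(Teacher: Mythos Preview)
Your proof is correct and follows exactly the approach sketched in the paper: you show that a zero row of $S=AB$ forces the corresponding row of $A$ to lie in $\lineal(K)=\{\zerovec\}$, so deleting or inserting such rows in $A$ leaves $K$ unchanged and yields the desired modified slack matrix. The paper's own argument is the single sentence preceding the lemma; you have simply filled in the details (via $a_i\in K\cap(-K)$) and added the helpful remark on why pointedness is needed.
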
 

Lemmas~\ref{lem:sm cone iff sm pointed cone} and~\ref{lem:zero rows sm pointed cone}  together also imply this statement:

\begin{lemma}\label{lem:sm add rem rows}
	If a matrix is a slack matrix of some polyhedral cone then every matrix obtained from it by adding or removing zero-rows is a slack matrix of some polyhedral cone as well.
\end{lemma}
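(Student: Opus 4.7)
The plan is to chain the two preceding lemmas in the obvious way. Starting from a matrix $S$ that is a slack matrix of some polyhedral cone $K$, I would first invoke Lemma~\ref{lem:sm cone iff sm pointed cone} to produce a pointed polyhedral cone $\widetilde{K}$ such that $S \in \mathcal{S}_{\widetilde{K}}$. This step is purely to upgrade the hypothesis on $K$ to the pointedness assumption required in the next step; no new construction is needed because it is exactly the content of that lemma.

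Next, let $S'$ be any matrix obtained from $S$ by adding or removing zero rows. Since $\widetilde{K}$ is pointed and $S$ is one of its slack matrices, Lemma~\ref{lem:zero rows sm pointed cone} applies and yields $S' \in \mathcal{S}_{\widetilde{K}}$. In particular, $S'$ is a slack matrix of a polyhedral cone, which is the required conclusion.

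I expect no genuine obstacle here: the statement is positioned as a packaging of the two preceding lemmas, and the only point to verify is that the pointedness guaranteed by Lemma~\ref{lem:sm cone iff sm pointed cone} matches the pointedness hypothesis of Lemma~\ref{lem:zero rows sm pointed cone}, which it does by construction. The only mildly subtle aspect is that the cone $\widetilde{K}$ certifying that $S$ is a slack matrix may differ from the original $K$, but this is harmless because the lemma only asserts the existence of \emph{some} polyhedral cone for $S'$.
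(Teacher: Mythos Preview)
Your proposal is correct and matches the paper's own argument exactly: the paper simply states that Lemmas~\ref{lem:sm cone iff sm pointed cone} and~\ref{lem:zero rows sm pointed cone} together imply the result, and you have spelled out precisely how they chain.
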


Let us further investigate the linear map $x\mapsto x^TB$. It induces the isomorphism
\begin{equation}\label{eq:iso L}
	L\xrightarrow[{\textup{isomorphism}}]{\cdot B}\RR^p\cdot S
\end{equation}
between the linear space $L$ and the row span of~$S$ 
because of the relations:
\begin{equation*}
	L\subseteq\lineal(K)^{\perp}=\leftkernel(B)^{\perp}
\end{equation*}
and
\begin{equation*}
	L\cdot B=(L+\lineal(K))\cdot B=\lin(K)\cdot B=(\RR^p\cdot A)\cdot B=\RR^p\cdot S\,.
\end{equation*}
It also induces the isomorphism
\begin{equation*}
	K\cap L\xrightarrow[{\textup{isomorphism}}]{\cdot B}\RR_+^p\cdot S
\end{equation*}
between the cone $K\cap L$ and the cone spanned by the rows of~$S$ 
since $(K\cap L)\cdot B=((K\cap L)+\lineal(K))\cdot B=K\cdot B=(\RR_+^p\cdot A)\cdot B=\RR_+^p\cdot S$. In particular, we have shown the following result:

\begin{lemma}\label{lem:pointed rank}
A polyhedral cone $K$ is pointed if and only if $\dim(K)=\rank(S)$ for any slack matrix $S$ of $K$.
\end{lemma}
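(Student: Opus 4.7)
The plan is to read off the result directly from the isomorphism in~\eqref{eq:iso L} together with the orthogonal decomposition $\lin(K)=L+\lineal(K)$ already established in this subsection. So no new constructions are needed; I just have to relate the three dimensions $\rank(S)$, $\dim(K)$, and $\dim(\lineal(K))$.

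First I would fix a slack matrix $S$ of $K$ coming from a representation $(A,B)$, so $S=AB$. The map $x\mapsto x^TB$ restricts to the isomorphism $L\xrightarrow{\cdot B}\RR^p\cdot S$ noted in~\eqref{eq:iso L}, so $\rank(S)=\dim(\RR^p\cdot S)=\dim(L)$. Next, since $L=\lin(K)\cap\lineal(K)^{\perp}$ and the sum $\lin(K)=L+\lineal(K)$ is orthogonal, I have $\dim(L)=\dim(\lin(K))-\dim(\lineal(K))=\dim(K)-\dim(\lineal(K))$. Combining the two identities gives the key equation
\begin{equation*}
  \rank(S)=\dim(K)-\dim(\lineal(K)),
\end{equation*}
which already holds for \emph{every} slack matrix $S$ of $K$, making the ``for any'' in the statement unambiguous.

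The lemma then follows immediately: $\rank(S)=\dim(K)$ is equivalent to $\dim(\lineal(K))=0$, which by definition is equivalent to $K$ being pointed.

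I do not expect a real obstacle here — everything in sight has already been proved in the paragraphs preceding the lemma; the only care needed is to observe that the identity $\rank(S)=\dim(K)-\dim(\lineal(K))$ is representation-independent, so that the choice of $(A,B)$ used to witness $S$ does not matter.
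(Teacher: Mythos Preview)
Your argument is correct and is exactly the approach the paper takes: the lemma is stated right after the paper establishes the isomorphism~\eqref{eq:iso L} and the orthogonal decomposition $\lin(K)=L+\lineal(K)$, and the paper simply says ``In particular, we have shown the following result,'' leaving the reader to combine $\rank(S)=\dim(L)$ with $\dim(L)=\dim(K)-\dim(\lineal(K))$ just as you did. Your added remark that the identity $\rank(S)=\dim(K)-\dim(\lineal(K))$ is representation-independent is a useful clarification of the ``for any'' quantifier.
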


Recall that if $P$ is a polytope with representation $(V,W,w)$ and slack matrix $S=[\onevec,V]\cdot B$ where 
\begin{equation*}
	B=\left[
		\begin{array}{c}
			w^T \\ -W^T 
		\end{array}
	\right],\,
\end{equation*}
then the homogenization $P^h$ of~$P$ is a pointed cone that also has $S$ as a slack matrix. Since $P^h$ is pointed, $L$ contains the entire cone and we can restrict the isomorphism in~\eqref{eq:iso L} to the set $\{1\}\times P=\convexhull(\rows{[\onevec,V]})$.  Thus we have that $\{1\}\times P$ is isomorphic to $\convexhull(\rows{[\onevec,V]})\cdot B=\convexhull(\rows{S})$. This establishes the first part of the following:

\begin{theorem}\label{thm:P iso conv rows S}
	If~$S$ is a slack matrix of the polytope~$P$, then $P$ is isomorphic to $\convexhull(\rows{S})$. In addition, we have $\dim(P)=\rank(S)-1$.
\end{theorem}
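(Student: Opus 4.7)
The first assertion is already essentially established by the discussion immediately preceding the statement: starting from a slack matrix $S = [\onevec,V] \cdot B$ of $P$, the homogenization $P^h$ is a pointed cone whose slack matrix is also $S$. For the pointed cone $P^h$, the lineality space is trivial, so $L = \lin(P^h)$ and the linear map $x \mapsto x^T B$ restricts to an isomorphism $L \to \RR^p \cdot S$. Restricting this isomorphism to the affine slice $\{1\} \times P = \convexhull(\rows{[\onevec,V]}) \subseteq L$ sends it bijectively (and affinely) to $\convexhull(\rows{[\onevec,V]}) \cdot B = \convexhull(\rows{S})$. Since $P$ is isomorphic to $\{1\} \times P$, this proves the first claim. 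So the only real work is the dimension formula $\dim(P) = \rank(S) - 1$, which I plan to handle in the next step.

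For the dimension formula, the plan is to combine two facts: (i) the homogenization cone $P^h$ is pointed and has $S$ as a slack matrix, so by Lemma~\ref{lem:pointed rank} we have $\dim(P^h) = \rank(S)$; and (ii) the homogenization of a polytope satisfies $\dim(P^h) = \dim(P) + 1$. Fact (ii) follows because $P^h = \RR_+ \cdot (\{1\} \times P)$ is the conic hull of an affine copy of $P$ sitting in the hyperplane $x_0 = 1$, so its linear hull is $\RR \cdot \lin(\{1\} \times \aff(P))$, which has dimension $\dim(\aff(P)) + 1 = \dim(P) + 1$. Combining the two gives $\dim(P) = \dim(P^h) - 1 = \rank(S) - 1$.

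The main (mild) obstacle is verifying the pointedness of $P^h$, which the paragraph preceding the theorem already asserts but does not prove; I would include a one-line justification noting that any nonzero element of $\lineal(P^h)$ would have to have $x_0 = 0$ (otherwise $P$ would contain a whole line through the corresponding point), and then $(0,y) \in P^h$ with $-(0,y) \in P^h$ would force $y$ to be a recession direction of $P$, contradicting boundedness. Apart from this, the argument is just chaining together Lemma~\ref{lem:pointed rank} with the standard cone/polytope dimension relation, so no further surprises are expected.
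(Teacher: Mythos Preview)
Your proposal is correct and follows essentially the same approach as the paper: the paper establishes the isomorphism in the paragraph preceding the theorem exactly as you describe, and then proves the dimension formula by combining $\dim(P^h)=\dim(P)+1$ with Lemma~\ref{lem:pointed rank} applied to the pointed cone $P^h$. Your additional justifications (for pointedness of $P^h$ and for the dimension relation $\dim(P^h)=\dim(P)+1$) are not spelled out in the paper but are sound and do no harm.
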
 

\begin{proof}
To prove the second statement, note that $\dim(P^h) = \dim(P)+1$.  By Lemma~\ref{lem:pointed rank}, we have that $\dim(P^h)=\rank(S)$.
\end{proof}

In the conic case, we had that $M \in \mathcal{S}_K$ if and only if $M^T \in \mathcal{S}_{K^*}$.  This correspondence breaks down for polytopes as we see in the example below.  The reason behind this is that we cannot scale $\VV$-representations of polytopes by positive scalars.

\begin{example}\label{ex:triangular prism}
The matrix
\[ M = \left( \begin{array}{ccccc}
1 & 1 & 0 & 0 & 0 \\
1 & 0 & 1 & 0 & 0 \\
1 & 0 & 0 & 1 & 0 \\
0 & 1 & 0 & 0 & 1 \\
0 & 0 & 1 & 0 & 1 \\
0 & 0 & 0 & 1 & 1 \end{array} \right) \]
is a slack matrix for the triangular prism in $\RR^3$. Thus, by Corollary~\ref{cor:sm poly}, $M$ satisfies both the RCGC and the CCGC, and the all ones-vector is in the column span of~$M$. However, 
the all-ones vector is not in the row span of $M$, so $M^T$ is not the slack matrix of any polytope.
% (In the language of \cite{GPT2012}, this implies that no bisimplex in $\RR^3$ is $2$-level).
\end{example}

Despite this complication, we can still derive some results for transposes of slack matrices of polytopes.  Recall that the {\em polar} of a polytope $P\subset \RR^n$ is 
\[ P^\circ = \setdef{y\in \RR^n}{x^T y \leq 1 \textup{ for all } x \in P}\, . \]
Then $P^\circ$ is a polytope whenever $0\in \textup{int}(P)$, the interior of $P$. Since translating $P$ does not change its slack matrices, we may assume that $0\in \textup{int}(P)$. Therefore, $P$ has an $\HH$-representation of the form 
$P = \{ x \in \RR^n \,:\, Wx \leq \onevec \}$ and $P^\circ = \convexhull(\textup{rows}(W))$. Similarly,  if $P=\convexhull(\textup{rows}(V))$, then $P^\circ = \setdef{x \in \RR^n}{Vx \leq \onevec}$.  This implies that the slack matrix of $P$ with respect to the representation $(V,W,\onevec)$ is the transpose of the slack matrix of $P^\circ$ with respect to the representation $(W,V,\onevec)$ and we get the following result that is analogous to Proposition~\ref{prop:transpose for cones} for cones.

\begin{proposition} \label{prop:polar1}
For any polytope $P$, there exists a slack matrix $M \in \mathcal{S}_P$ such that $M^T$ is also a slack matrix of a polytope.
\end{proposition}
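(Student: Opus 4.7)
The plan is to follow the hint given in the paragraph immediately preceding the proposition: exploit the symmetry between a polytope and its polar when both have the all-ones right-hand side. First, I would reduce to the case where $P$ is full-dimensional in its ambient space by replacing $\RR^n$ by $\aff(P)$ if necessary; since translating $P$ does not alter its slack matrices (it changes $V$ and $w$ but leaves the entries $w_j - W_j^T v_i$ invariant), we may further translate so that $\zerovec$ lies in the interior of $P$. Under this normalization, $P$ admits an $\mathcal{H}$-representation of the form $P = \setdef{x \in \RR^n}{Wx \leq \onevec}$ and a $\mathcal{V}$-representation $P = \convexhull(\rows{V})$, and the polar is the polytope $P^\circ = \convexhull(\rows{W}) = \setdef{x \in \RR^n}{Vx \leq \onevec}$, which also contains $\zerovec$ in its interior.

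Next I would write down explicitly the slack matrix $M$ of $P$ with respect to the representation $(V, W, \onevec)$ and the slack matrix $M'$ of $P^\circ$ with respect to $(W, V, \onevec)$. Using~\eqref{eq:defslackpoly}, the $(i,j)$-entry of $M$ is $1 - W_j^T V_i$ and the $(j,i)$-entry of $M'$ is $1 - V_i^T W_j$. Comparing these entries gives $M^T = M'$, so $M^T$ is a slack matrix of the polytope $P^\circ$, which is exactly what the proposition asserts.

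The only obstacle worth mentioning is the standing assumption $\dim(P) \geq 1$ carried throughout the section, together with the requirement that $\zerovec$ genuinely lie in the interior of $P$ for $P^\circ$ to be a polytope. Both issues are handled by the initial reduction: passing to the affine hull of $P$ and translating. Once that normalization is in place, the remainder is the direct computation $(1 - W_j^T V_i)_{ij} = (1 - V_i^T W_j)_{ji}^{T}$, and the result follows immediately from the definition of a slack matrix of a polytope as given in~\eqref{eq:defslackpoly}.
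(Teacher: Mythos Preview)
Your proof is correct and follows essentially the same approach as the paper's own argument (which appears in the paragraph immediately preceding the proposition): translate so that $\zerovec\in\textup{int}(P)$, take the representation $(V,W,\onevec)$, and observe that the transpose is the slack matrix of $P^\circ$ with respect to $(W,V,\onevec)$. Your explicit reduction to the full-dimensional case by passing to $\aff(P)$ is a welcome bit of added rigor that the paper leaves implicit.
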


In the light of Theorem~\ref{thm:sm poly}, this says that slack matrices of polytopes (which already have $\onevec$ in their column span) allow positive scalings of their columns that puts $\onevec$ into their row span as well. This is false for 
general nonnegative matrices.

\begin{example} Continuing Example~\ref{ex:triangular prism}, we see that the following matrix $M'$ obtained by 
scaling the columns of $M$ is also a slack matrix of the same prism and does have $\onevec$ in its row span:
\[ M' = \left( \begin{array}{ccccc}
2 & 2 & 0 & 0 & 0 \\
2 & 0 & 4 & 0 & 0 \\
2 & 0 & 0 & 4 & 0 \\
0 & 2 & 0 & 0 & 2 \\
0 & 0 & 4 & 0 & 2 \\
0 & 0 & 0 & 4 & 2 \end{array} \right). \]
The prism has vertices: $$(0,1,-1),(2,-1,-1),(-2,-1,-1),(0,1,1),(2,-1,1),(-2,-1,1)$$ and $M'$ comes from the facet 
description: 
$$z \leq 1, -y \leq 1, -x+y \leq 1, x+y \leq 1,  -z \leq 1.$$
Therefore, $P^\circ$ has vertices $(0,0,1), (0,-1,0),(-1,1,0),(1,1,0),(0,0,-1)$ and is a bisimplex with slack matrix 
$M'^T$.
%The slack matrix $M$ comes from the following facet description of $P$:
%$$ \frac{z}{2} \leq \frac{1}{2}, \,\, -\frac{y}{2} \leq \frac{1}{2}, \,\, -\frac{x}{4}+\frac{y}{4} \leq \frac{1}{4}, \,\, \frac{x}{4}+\frac{y}{4}\leq \frac{1}{4},  \,\,-\frac{z}{2} \leq  \frac{1}{2}.$$
\end{example}

We can also show a converse to Proposition~\ref{prop:polar1}. 

\begin{proposition} \label{prop:polar2}
Suppose $M\in \RR_+^{p\times q}$ such that $M$ and $M^T$ are both slack matrices of polytopes.  Then there exists a polytope $P$, with $0 \in \textup{int}(P)$, such that $M \in \mathcal{S}_P$ and $M^T \in \mathcal{S}_{P^\circ}$.
\end{proposition}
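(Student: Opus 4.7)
The plan is to start from any polytope $P_0$ with $M \in \mathcal{S}_{P_0}$ (which exists by hypothesis), fix a representation $(V_{P_0}, W_{P_0}, w_{P_0})$ so that $M = \onevec w_{P_0}^T - V_{P_0}W_{P_0}^T$, and produce $P$ as a suitable translate of $P_0$. Translation does not change the slack matrix, so $M \in \mathcal{S}_P$ will be automatic; the work lies in choosing the translation so that $w_P$ is a positive multiple of $\onevec$, which simultaneously forces $0 \in \textup{int}(P)$ and lets me write down an explicit representation of $P^\circ$ whose slack matrix equals $M^T$.

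The right translation is governed by the scalar $a$ in the decomposition $\onevec = a w_{P_0} + W_{P_0} b$. Such a decomposition exists because $\onevec$ lies in the row span of $M$, which equals the span of $\{w_{P_0}\}$ together with the columns of $W_{P_0}$, and the coefficient $a$ is unique since $P_0$ being bounded prevents $w_{P_0}$ from lying in the column span of $W_{P_0}$. A short rank-factorization computation shows that this $a$ coincides with $\onevec^T\mu$ for any $\mu \in \RR^q$ satisfying $M\mu = \onevec$.

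The crux is to show $a > 0$. Applying Theorem~\ref{thm:sm poly} to $M$ gives $\onevec \in M \cdot \RR^q$, so some $\mu$ exists; the CCGC (Corollary~\ref{cor:sm ccgc rcgc}) together with $\onevec \ge 0$ lets me choose $\mu$ entrywise nonnegative. Applying Theorem~\ref{thm:sm poly} to $M^T$ gives $\onevec \in \RR^p \cdot M = (\ker M)^\perp$, which is exactly what is needed for $\onevec^T\mu$ to be independent of the choice of $\mu$. Combining, any such $\mu$ satisfies $\onevec^T\mu = \sum_j \mu_j$ with $\mu \ge 0$ and $\mu \neq 0$ (since $M\mu = \onevec \neq 0$), so $a > 0$.

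Finally, setting $t = -b/a$ and $P = P_0 - t$, a direct check gives $w_P = w_{P_0} - W_{P_0}t = (1/a)\onevec > 0$, placing $0$ in the interior of $P$. For the polar, I would use the representation of $P^\circ$ with vertex matrix $V_{P^\circ} := aW_{P_0}$ (the vertices of $P^\circ$) and the $\mathcal{H}$-representation $(1/a)V_P\, y \le (1/a)\onevec$; both pieces are valid precisely because $a > 0$. Substituting into~\eqref{eq:defslackpoly} and using $V_P W_{P_0}^T = (1/a)\onevec\onevec^T - M$ (immediate from $M = \onevec w_P^T - V_P W_P^T$ in the translated coordinates), the slack matrix of $P^\circ$ in this representation collapses to $M^T$, completing the argument.
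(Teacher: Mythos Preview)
Your argument is correct and takes a genuinely different route from the paper's. The paper first rescales $M$ so that $\onevec\in\convexhull(\rows{M})$, observes that then $\rank(M-J)=\rank(M)-1$ (with $J$ the all-ones matrix), takes a rank factorization $M-J=AB$, and defines $P:=\convexhull(\rows{A})$ directly; Lemma~\ref{lem:rank factorization} then identifies $P$ with $\{x:\onevec+x^TB\ge\zerovec\}$, from which $0\in\textup{int}(P)$ and $M^T\in\mathcal{S}_{P^\circ}$ fall out at once. Your approach instead keeps $M$ fixed, starts from any realizing polytope $P_0$, and finds the correct \emph{translation} of $P_0$: the key scalar is $a=\onevec_q^T\mu$ for any $\mu$ with $M\mu=\onevec_p$, and you show $a>0$ by choosing $\mu\ge 0$ via the CCGC. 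Both proofs hinge on the same two facts (CCGC for $M$ and $\onevec$ in the row span of $M$), but the paper extracts $P$ algebraically from a factorization of $M-J$, whereas you extract it geometrically as a translate of a given $P_0$; your version makes the role of the translation transparent and avoids the initial rescaling, at the cost of a slightly longer endgame for $P^\circ$.

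One small point to make explicit: your identification of the row span of $M$ with $\textup{span}(\{w_{P_0}\}\cup\cols{W_{P_0}})$, and the clean formula $a=\onevec^T\mu$ obtained from it, use that $[\onevec,V_{P_0}]$ has full column rank, i.e.\ that $P_0$ is full-dimensional in its ambient space. This is harmless---one may always restrict to $\aff(P_0)$ first---but it should be stated. With that caveat, every step (uniqueness of $a$ from boundedness of $P_0$, the CCGC choice of $\mu\ge0$, the translation $t=-b/a$ yielding $w_P=(1/a)\onevec$, and the final slack computation for $P^\circ$) checks out.
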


\begin{proof}
Since $M^T$ is a slack matrix of a polytope, we have that $\onevec \in \RR_+^p \cdot M$.  Without loss of generality, we can scale $M$ by a positive scalar so that $\onevec \in \convexhull(\rows{M})$.  

Let $M$ be a slack matrix of a polytope $R$ with $\textup{dim}(R)=d$. By Theorem~\ref{thm:P iso conv rows S}, 
$\rank(M)=d+1$.  Since the convex hull of the rows of $M$ is isomorphic to $R$, we have that the affine hull of the rows of $M$ has dimension $d$.  Let $J$ denote the all-ones matrix of dimension $p \times q$.  Since $\onevec$ is contained in the affine hull of the rows of $M$, we have that the affine hull of the rows of $M-J$ passes through the origin and has dimension $d$.  Hence, $\rank(M-J)=d$. This implies that we can write $M-J=AB$ with $A \in \RR^{p \times d}$ and $B \in \RR^{d \times q}$.  

Let $A'= \left( \onevec , A  \right)$ and let $B'= \left(\onevec, B^T \right)^T$.  Then $M=A'B'$ is a rank factorization of $M$. Let $P := \convexhull(\textup{rows}(A))$ and $Q := \{ x \in \RR^d \,:\, \onevec + x^TB \geq \zerovec\}$. 
Then the rows of $A'$ form a $\VV$-representation of $P^h$ and the columns of $B'$ form a $\HH$-representation for $Q^h = \{ (x_0,x) \in \RR^{d+1} \,:\, \onevec x_0 + x^TB \geq \zerovec\}$. By Lemma~\ref{lem:rank factorization}, $P^h=Q^h$ which implies that 
$P=Q$. Therefore, $M$ is a slack matrix of $P$ and $M^T$ is a slack matrix of $P^\circ$.
\end{proof}

%%%%%%%%%%%%%%%%%%%%%%%%%%%%%%%%%%%%%%%%%%%%%%%%%%%%
\section{An Algorithm to Recognize Slack Matrices} \label{sec:algorithm}
In this section, we discuss the algorithmic problem of deciding whether a given nonnegative matrix has the RCGC (or, equivalently, the CCGC). According to Corollaries~\ref{cor:sm ccgc rcgc} and~\ref{cor:sm poly} this is the crucial step to be performed in order to decide whether a given matrix is a slack matrix of a cone or a polytope. 

We start with a promising result:
\begin{theorem}
	The problem to decide whether a nonnegative matrix satisfies the RCGC (or the CCGC) is in coNP. In particular, the same holds for checking the property of being a slack matrix (of a cone or of a polytope).
\end{theorem}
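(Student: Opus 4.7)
The plan is to exhibit a polynomial-size certificate witnessing when a given $M \in \RR_+^{p \times q}$ \emph{fails} the RCGC. The inclusion $\RR_+^p \cdot M \subseteq \RR^p \cdot M \cap \RR_+^q$ always holds since $M$ is nonnegative, so RCGC fails precisely when there is some $z = v^T M$ (for $v \in \RR^p$) that is coordinate-wise nonnegative yet does not lie in $\RR_+^p \cdot M$. Applying Farkas' lemma to the polyhedral cone $\RR_+^p \cdot M$, whose dual in $\RR^q$ is $\setdef{c \in \RR^q}{Mc \geq \zerovec}$, non-membership $z \notin \RR_+^p \cdot M$ is equivalent to the existence of $c \in \RR^q$ with $Mc \geq \zerovec$ and $z^T c < 0$.

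Thus a short RCGC-violation witness is a pair $(v, c) \in \RR^p \times \RR^q$ satisfying (i) $v^T M \geq \zerovec$, (ii) $Mc \geq \zerovec$, and (iii) $v^T M c < 0$. Verification uses only two matrix-vector products and one inner-product sign test, and hence runs in polynomial time in $\langle M \rangle$, $\langle v \rangle$, $\langle c \rangle$. To finish, I would invoke standard bit-length bounds from the theory of rational polyhedra: the cones $\setdef{v}{v^T M \geq \zerovec}$ and $\setdef{c}{Mc \geq \zerovec}$ are defined by inequalities of bit-length polynomial in $\langle M \rangle$, so their extreme rays have polynomial bit-length. Fixing $v$ to be a suitable extreme ray of the first cone reduces (iii) to a strict linear inequality in $c$, and applying Farkas together with the basic-solution bit-length bound produces a $c$ of polynomial bit-length. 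This places RCGC (and hence, by Corollary~\ref{cor:sm ccgc rcgc}, also CCGC) in coNP.

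The statements about slack matrices follow immediately from Corollaries~\ref{cor:sm ccgc rcgc} and~\ref{cor:sm poly}. For cones: deciding whether $M$ is a slack matrix amounts to checking nonnegativity (polynomial time) and RCGC (coNP). For polytopes: additionally one must verify $\rank(M) \geq 2$ (Gaussian elimination) and $\onevec \in M \cdot \RR^q$ (solve a linear system), both in polynomial time, so the overall problem remains in coNP. The only nontrivial step in the sketch is the bit-length bound on $(v, c)$, which is the main obstacle; it is routine but must be handled carefully since condition (iii) is strict. The substantive content---the very existence of a Farkas witness---is immediate from the duality between $\RR_+^p \cdot M$ and $\setdef{c}{Mc \geq \zerovec}$ already exploited in Section~\ref{subsec:cones}.
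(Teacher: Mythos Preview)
Your argument is correct and matches the paper's approach: both certify failure of the RCGC by exhibiting a point in $(\RR^p\cdot M\cap\RR_+^q)\setminus\RR_+^p\cdot M$ of polynomial encoding length together with a separating hyperplane (your Farkas vector~$c$) of polynomial encoding length, and then observe that the additional checks for the polytope case are polynomial-time. One small caveat: the cone $\setdef{v}{v^TM\ge\zerovec}$ has lineality space equal to the left kernel of~$M$ and so need not have extreme rays in the strict sense; working instead with extreme rays of the pointed image cone $\RR^p\cdot M\cap\RR_+^q\subseteq\RR_+^q$ (and then pulling back to a~$v$ by solving the linear system $v^TM=z$) resolves this immediately and the bit-length bounds go through as you describe.
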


\begin{proof}
	If the given matrix $M\in\RR_+^{p\times q}$ does not satisfy the RCGC, then there is some point $x\in\RR^p\cdot M\cap\RR_+^q\setminus\RR_+^p\cdot M$ (which can be chosen to have coordinates whose encoding lengths are bounded polynomially in the encoding length of~$M$). The fact that $x\not\in\RR_+^p\cdot M$ can be certified by the help of some separating hyperplane whose normal vector can be chosen to have coordinates with encoding length bounded polynomially in the encoding length of~$M$ as well.
\end{proof}

Next, we are going to describe an algorithm to check the CCGC (equivalently, the RCGC) for a nonnegative matrix.  By Corollary~\ref{cor:sm ccgc rcgc}, this algorithm will then provide a method to check if a given nonnegative matrix is a slack matrix of a cone.  To check if the matrix is the slack matrix of a polytope (see Corollary~\ref{cor:sm poly}), we can add the additional step of checking if the all-ones vector is in the column span of the matrix which is doable in polynomial time.  A SAGE worksheet implementing this code can be found at {\tt http://www.math.washington.edu/{\raise.17ex\hbox{$\scriptstyle\sim$}}rzr}.

\vspace{.2cm}

\centerline{\bf Algorithm to check if a nonnegative matrix has the CCGC}
\vspace{.1cm}
\noindent{\bf Input}:  A matrix $M  \in \RR_+^{p\times q}$.\\
\noindent{\bf Output}: {\tt True} if $M$ has the CCGC and {\tt False} otherwise.
%\marginpar{We may want to change the algorithm to the row version?}
\begin{enumerate}
\item Compute a basis $L$ for the left kernel of $M$.  For each vector $\ell$ in $L$, generate the equation $\ell^T x =0$.
\item Generate an $\HH$-representation of the cone~$K$ with the equations from the previous step and the inequalities $x_1\geq 0,\ldots,x_p \geq 0$.
\item Compute a minimal $\VV$-representation of $K$.
\item Normalize the vectors in the $\VV$-representation and the columns of $M$.
\item Check that each normalized vector in the $\VV$-representation is a normalized column of $M$. If so, return {\tt True}.  If not, return {\tt False}.
\end{enumerate}

\begin{proof}
	We have $K=M\cdot\RR^q\cap\RR_+^p$ and $M\cdot\RR_+^q\subseteq K$ due to the nonnegativity of~$M$. Thus,  $M$ satisfies
the CCGC if and only if $K\subseteq M\cdot\RR_+^q$ holds, which is what the algorithm checks in the last three steps (note that all cones involved are pointed because they are contained in $\RR_+^p$).
\end{proof}

The only computationally challenging part of the algorithm is converting from the $\HH$-representation of $K$ to a $\VV$-representation.  There are several algorithms to do this, and we refer to \cite{Joswig}, \cite{Matheiss}, and \cite{Seidel} for information on the different techniques.  No polynomial time algorithm for this conversion exists, since the $\VV$-representation may have size exponential in that of the $\HH$-representation.  If the  dimension of the cone
is fixed, however, then there do exist polynomial time algorithms for the conversion \cite{Dyer}. Thus, we obtain the following complexity results.

\begin{theorem}
	For fixed~$r$, checking whether a  rank $r$ matrix satisfies the RCGC (CCGC) can be done in polynomial time. In particular, checking whether matrices of fixed rank are slack matrices of cones or polytopes can be done in polynomial time.  
\end{theorem}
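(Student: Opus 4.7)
The plan is to show that each step of the algorithm preceding this theorem can be carried out in polynomial time when $\rank(M)=r$ is fixed, and then appeal to Corollaries~\ref{cor:sm ccgc rcgc} and~\ref{cor:sm poly} to conclude the statement about slack matrices.

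First I would verify that steps (1), (2), (4), and (5) of the algorithm run in polynomial time regardless of the rank. Step (1) is a kernel computation, which is standard linear algebra and polynomial in the bit-length of $M$; step (2) is just bookkeeping; step (4) is entry-wise scaling; step (5) is a polynomial number of equality tests between columns of size $p$. The check for $\onevec \in M\cdot\RR^q$ needed for Corollary~\ref{cor:sm poly} is also a polynomial-time linear algebra task.

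The key step is (3), converting the $\HH$-representation of the cone
\begin{equation*}
  K = M\cdot\RR^q \cap \RR_+^p
\end{equation*}
into a $\VV$-representation. In general this conversion has no polynomial bound, but here I would exploit the rank hypothesis as follows. The cone $K$ is contained in the column span $M\cdot\RR^q$, which has dimension $r$. Using the basis $L$ of the left kernel produced in step (1), one can explicitly parametrize the column span by $\RR^r$ via an isomorphism $\psi\colon\RR^r \to M\cdot\RR^q$. Pulling back the nonnegativity inequalities $x_i\ge 0$ through $\psi$ yields an $\HH$-representation of $\psi^{-1}(K)$ consisting of at most $p$ linear inequalities in $\RR^r$, with $r$ fixed. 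By the result of Dyer~\cite{Dyer}, the vertex/ray enumeration of a polyhedron in fixed dimension can be performed in time polynomial in the input size, so $\psi^{-1}(K)$, and hence $K$ itself, admits a $\VV$-representation computable in polynomial time. The upper bound theorem also guarantees that the output $\VV$-representation has size polynomial in $p$, so steps (4) and (5) remain polynomial.

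The main obstacle I anticipate is just making the reduction to fixed dimension precise: one must be careful that the equations defining $M\cdot\RR^q$ in $\RR^p$ really can be eliminated so that Dyer's algorithm applies to a full-dimensional pointed cone in $\RR^r$. Pointedness is immediate because $\psi^{-1}(K)$ sits inside $\psi^{-1}(\RR_+^p)$, which contains no lines as long as $\psi$ is injective on $\RR^r$, and full-dimensionality in $\RR^r$ can be arranged by discarding any implicit equalities among the $x_i\ge 0$, detectable again by linear algebra in polynomial time. With those details in place, the polynomial-time decision procedures for the RCGC/CCGC, and hence (via the corollaries) for being the slack matrix of a cone or polytope of fixed rank, follow.
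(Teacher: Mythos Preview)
Your proposal is correct and follows the same approach as the paper: the paper's argument is simply the observation preceding the theorem that, since the cone $K=M\cdot\RR^q\cap\RR_+^p$ has dimension $\rank(M)=r$, the $\HH$-to-$\VV$ conversion in step~(3) can be done in polynomial time for fixed~$r$ by Dyer's result. Your write-up is in fact more careful than the paper's, since you make explicit the reduction from the ambient space $\RR^p$ to a full-dimensional problem in $\RR^r$ (so that the fixed-dimension enumeration result genuinely applies) and you note via the upper bound theorem that the output size is polynomial; the paper leaves both of these points implicit.
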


Given an $\HH$-polyhedron  $P$ and a $\VV$-polytope $Q$ contained in $P$, the problem of deciding whether $P=Q$ is known as the \emph{polyhedral verification problem}. The complexity of this problem is unknown \cite{Seymour}. However,  a polynomial time algorithm for the polyhedral verification problem would yield an \emph{output sensitive} algorithm for the problem of computing the facets of a polytope given in $\VV$-representation, and thus solve a decades old open problem in computational geometry (see~\cite{JZ04}). 

Clearly,  given a $\VV$-polytope it is easy to check whether it is contained in an $\HH$-polyhedron. The reverse problem of checking whether an $\HH$-polyhedron is contained in a $\VV$-polytope is known to be coNP-complete \cite{FreundOrlin}. Note that the polyhedral verification problem is the restriction of the latter problem to those instances in which the $\VV$-polytope is contained in the $\HH$-polyhedron (see also \\ \url{http://www.inf.ethz.ch/personal/fukudak/polyfaq/node21.html}, \cite{KaibelPfetsch} and~\cite{KP03}).

\begin{theorem}\label{thm:polytop verification}
	The following problems can be reduced in polynomial time to each other:
	\begin{enumerate}
		\item The polyhedral verification problem
		\item Is a given matrix a slack matrix of a polytope?
		\item Is a given matrix a slack matrix of a cone?
		\item Does a given matrix satisfy the RCGC/CCGC?
	\end{enumerate}
\end{theorem}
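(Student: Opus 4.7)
The plan is to observe that (2), (3), (4) are already polynomially interreducible via the corollaries just established, and then tie this cluster to (1) by a single round trip. Corollary~\ref{cor:sm ccgc rcgc} identifies (3) and (4) as the same decision problem, and Corollary~\ref{cor:sm poly} gives a polynomial reduction (2) $\to$ (4) by the linear-algebra side-checks $\rank(M)\ge 2$ and $\onevec\in M\cdot\RR^q$. It therefore suffices to prove (4) $\to$ (1) and (1) $\to$ (2); chaining these with the above yields all pairwise reductions.

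For (4) $\to$ (1), given $M\in\RR_+^{p\times q}$ I would first discard zero columns (which affects neither the CCGC nor the slicing below). The CCGC asks whether the $\VV$-cone $M\cdot\RR_+^q$ (generated by the columns of $M$) equals the $\HH$-cone $M\cdot\RR^q\cap\RR_+^p$ (cut out by $x\ge\zerovec$ together with the equations $\ell^T x=0$ for $\ell$ ranging over a basis of the left kernel of $M$). Both cones sit in $\RR_+^p$, and after removing zero columns every nonzero element has $\onevec^T x>0$. Slicing by the hyperplane $\{x:\onevec^T x=1\}$ therefore produces a $\VV$-polytope $Q'$ contained in an $\HH$-polytope $P'$, and the CCGC holds if and only if $Q'=P'$---an instance of~(1).

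For (1) $\to$ (2), given $P=\{x\in\RR^n:Wx\le w\}$ and $Q=\convexhull(\rows{V})\subseteq P$, the plan is first to preprocess in polynomial time: test whether $P$ is bounded, and compute $\aff(P)$ (by detecting the implicit equalities in the H-system through LP) and $\aff(Q)$; if they differ, output $P\ne Q$. Otherwise restrict coordinates to this common affine hull so that $P,Q$ become full-dimensional in some $\RR^n$ with $n\ge 1$. Form the candidate slack matrix $M=[\onevec,V]\cdot[w,-W]^T$; the claim is that $M$ is a slack matrix of a polytope if and only if $P=Q$. The ``if'' direction is immediate. For the converse, consider the linear map $T\colon\RR^{n+1}\to\RR^q$, $T(y)=y[w,-W]^T$, for which $T^{-1}(\RR_+^q)=P^h$, and which is injective because $[w,-W]$ has rank $n+1$ when $P$ is a full-dimensional bounded polytope of dimension at least one. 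The full-dimensionality of $Q$ makes the rows of $[\onevec,V]$ linearly span $\RR^{n+1}$, so $\RR^p\cdot M=T(\RR^{n+1})$ and $\RR_+^p\cdot M=T(Q^h)$. The RCGC for $M$ thus becomes $T(Q^h)=T(\RR^{n+1})\cap\RR_+^q=T(P^h)$, which by injectivity forces $Q^h=P^h$, i.e., $P=Q$; meanwhile $\onevec\in M\cdot\RR^q$ holds automatically by the LP-duality observation~\eqref{eq:zerovec1incone}, so Corollary~\ref{cor:sm poly} applies.

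The step I would expect to be the main obstacle is the full-dimensionality preprocessing in (1) $\to$ (2). Without it the reduction breaks: for $P=[0,1]^2$ and $Q$ the diagonal segment from $(0,0)$ to $(1,1)$, a direct computation shows the slack matrix has the RCGC even though $Q\subsetneq P$, because the part of $P$ missed by $Q$ lies outside the linear span of $Q^h$. The affine-hull preprocessing is precisely what rules out this kind of spurious agreement, and it also guarantees the LP-duality hypothesis needed to place $\onevec$ in the column span of $M$.
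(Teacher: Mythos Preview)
Your argument is correct and follows essentially the same plan as the paper: link (2), (3), (4) via Corollaries~\ref{cor:sm ccgc rcgc} and~\ref{cor:sm poly}, then close the cycle through (1) by forming the candidate matrix $M=[\onevec,V][w,-W]^T$ and showing it is a slack matrix of a polytope precisely when $P=Q$. The differences are only in routing and preprocessing. To reach (1) from the RCGC/CCGC cluster, the paper goes (3)$\to$(2) by removing zero rows and rescaling rows so that $\onevec$ enters the column span (Lemma~\ref{lem:zero rows sm pointed cone} and Theorem~\ref{thm:sm poly}), then (2)$\to$(1) via Corollary~\ref{cor:sm poly aff}; your direct (4)$\to$(1) by slicing both cones with the simplex hyperplane $\{\onevec^Tx=1\}$ is a clean shortcut. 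For (1)$\to$(2), the paper works with the \emph{affine} map $\varphi(x)=w-Wx$ together with Corollary~\ref{cor:sm poly aff}, which lets it get by with the lighter checks ``$P$ pointed'' (i.e., $\ker W=\{0\}$) and ``$\dim(P)=\dim(Q)$''; your homogenized version via the linear map $T$ and the RCGC is equivalent but forces the extra boundedness test and the coordinate restriction to the common affine hull in order to make $T$ injective and to invoke~\eqref{eq:zerovec1incone}.
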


\begin{proof}
	Corollary~\ref{cor:sm poly} shows that~(2) can be reduced (in polynomial time) to (4) (since checking whether $\onevec$ is contained in the column space can be done in polynomial time) and Corollary~\ref{cor:sm ccgc rcgc} shows that (4) can be reduced to (3).  
	
	We can also reduce (3) to (2): Suppose we need to check whether a given matrix~$M$ is a slack matrix of a cone.  By Lemma~\ref{lem:zero rows sm pointed cone}, we can assume that $M$ has no zero rows.  We can also scale the rows of $M$ by positive scalars without effect on $M$ being a slack matrix of a cone.  Using these two facts, we can assume that $\onevec$ is in the column span of $M$.  Then, being a slack matrix of a cone is equivalent to being a slack matrix of a polytope due to Theorem~\ref{thm:sm poly}.
	
	Since Corollary~\ref{cor:sm poly aff} shows how to reduce (2) to (1), it thus remains to establish a reduction of (1) to~(2).  Let $Q=\convexhull(\rows{V})$ with $V\in\RR^{p\times n}$ and $P=\setdef{x\in\RR^n}{Wx\le w}$ with $W\in\RR^{q\times n}$ and $w\in\RR^q$ with $Q\subseteq P$. Suppose we need to decide whether $P= Q$. First, we check whether~$P$ is pointed (i.e., $W$ has a trivial right kernel) and $\dim(P)=\dim(Q)$ (both checks can be done in polynomial time, the second one using linear programming). If either check fails, then $P\ne Q$. 
	
	So let us assume $\dim(P)=\dim(Q)$  and that~$P$ is pointed. The latter fact implies that the affine map $\varphi:\RR^n\rightarrow\RR^q$ defined via $\varphi(x)=w-Wx$ is injective. Let $M$ be the matrix arising from~$V$ by applying~$\varphi$ to each row. Then, due to $Q\subseteq P$, we have that~$M$ is nonnegative. According to Corollary~\ref{cor:sm poly aff}, the matrix $M$ is a slack matrix of a polytope if and only if 
	\begin{equation}\label{eq:polytop verification}
		\convexhull(\rows{M})=\aff(\rows{M})\cap \RR_+^q.
	\end{equation}
	Since we have 
	\begin{equation*}
		\convexhull(\rows{M})=\varphi(\convexhull(\rows{V}))=\varphi(Q)
	\end{equation*}
	and  
	\begin{multline*}
		\aff(\rows{M})\cap \RR^{q}_{+}
		=\varphi(\aff(\rows{V}))\cap\RR_+^q
		=\varphi(\aff(Q))\cap\RR_+^q\\
		=\varphi (\setdef{x\in\aff(Q)}{\varphi(x)\ge\zerovec})
		=\varphi(P\cap\aff(Q))=\varphi(P)
	\end{multline*}
	(here we used that $\dim(P)=\dim(Q)$), condition~\eqref{eq:polytop verification} is equivalent to $\varphi(P)=\varphi(Q)$.
	In turn, this is equivalent to $P=Q$ since $\varphi$ is injective. Thus, $P=Q$ is equivalent to $M$ being the slack matrix of a polytope.
\end{proof}

%%%%%%%%%%%%%%%%%%%%%%%%%%%%%%%%%%%%%%%%%%%%%%%%%%%%
\section{A Combinatorial Characterization of Slack Matrices}\label{sec:comb}

Our second characterization of slack matrices of  cones and polytopes relies on incidence structures. For a (nonnegative) matrix $M$, we denote by $\inc{M}$ the 0/1-matrix with $(\inc{M})_{ij}=1$ if and only if $M_{ij}=0$. The matrices $\inc{M}$ arising from slack matrices~$M$ of a polyhedral cone~$K$ or of a polytope~$P$ are called the \emph{incidence matrices} of~$K$ or $P$, respectively.

We start by characterizing the slack matrices of polytopes, since  the corresponding statement for cones can easily be deduced from the one for polytopes. The characterization is restricted to nonnegative matrices of rank at least two.  It is easy to see that no matrix of rank one is a slack matrix of a nontrivial polytope. One may (or may not) want to consider a rank-zero matrix as a slack matrix of the polytope consisting of the zero-vector in $\RR^0$.

\begin{theorem}\label{thm:incidences_polytopes}
	A nonnegative matrix $M$ with $\rank(M)\ge 2$ is a slack matrix of some  polytope if and only if $\inc{M}$ is an incidence matrix of some $(\rank(M)-1)$-dimensional polytope and $\onevec$ is contained in the column span of~$M$.
\end{theorem}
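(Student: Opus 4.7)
For the forward direction, the result is immediate from earlier results in the paper: if $M$ is a slack matrix of a polytope $Q$, then by definition $\inc{M}$ is the incidence matrix of $Q$, Theorem~\ref{thm:P iso conv rows S} yields $\dim(Q) = \rank(M) - 1$, and Corollary~\ref{cor:sm poly} supplies $\onevec \in M \cdot \RR^q$, so $Q$ itself witnesses the stated incidence and column-span conditions.

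For the reverse direction, the plan is to build a polytope $Q$ directly from $M$ via a rank factorization. Set $d = \rank(M) - 1$. Because $\onevec$ lies in the column span and $\rank(M) = d + 1$, I would write $M = [\onevec, V] \cdot B$ with $V \in \RR^{p \times d}$ and $B \in \RR^{(d+1) \times q}$ both of rank $d+1$, using the change-of-basis trick exhibited in Example~\ref{ex:square slack}. Set $Q := \convexhull(\rows V) \subseteq \RR^d$; the claim is $M \in \mathcal{S}_Q$. By Corollary~\ref{cor:sm ccgc rcgc} together with Lemma~\ref{lem:rank factorization}, this reduces to showing that $M$ satisfies the CCGC: the lemma will then identify the cone generated by the rows of $[\onevec, V]$ as having the columns of $B$ as an $\HH$-representation, and slicing by $x_0 = 1$ recovers $Q$ with $M$ as its slack matrix.

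The incidence hypothesis enters the CCGC verification through two combinatorial consequences of the factorization. First, the rows of $V$ are pairwise distinct and in convex position: if $v_k = \sum_{i \neq k} \lambda_i v_i$ were a convex combination with some $\lambda_i > 0$, the same combination would hold among the rows of $M$, and nonnegativity would force $M_{ij} = 0$ for every $j$ with $M_{kj} = 0$ and every $i$ with $\lambda_i > 0$; then every facet of $P$ through vertex $u_k$ also contains $u_i$, and since each vertex of a polytope is uniquely determined by its incident facets, $u_i = u_k$, contradicting distinctness. Second, writing $B_{\cdot,j} = (\beta_j, -w_j^T)^T$, I need to show that for each $j$ the rows of $V$ indexed by $I_j = \{i : M_{ij} = 0\}$ affinely span the hyperplane $\{x \in \RR^d : w_j^T x = \beta_j\}$, so each column of $B$ produces a genuine facet-defining inequality of $Q$. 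The upper bound $\rank([\onevec, V_{I_j,:}]) \leq d$ is automatic from orthogonality to the nonzero vector $B_{\cdot,j}$, and since $B$ has full row rank this rank equals $\rank(M[I_j,:])$; the matching lower bound should come from analyzing $M[I_j,:]$, whose zero pattern encodes the vertex-facet incidences of the $(d-1)$-dimensional facet $F_j$ of $P$, together with the rank equality $\rank(M) = d + 1$.

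The main obstacle is precisely this rank lower bound: translating the purely combinatorial incidence hypothesis into the geometric statement that each column of $B$ defines a true facet of $Q$, rather than a redundant inequality or a lower-dimensional face. This is where the exact equality $\dim P = \rank(M) - 1$ pulls its weight, and I expect it will require either an inductive argument on dimension, or a careful joint analysis of how the sparsity pattern, nonnegativity, and rank of $M$ constrain the face lattice of $Q = \convexhull(\rows V)$.
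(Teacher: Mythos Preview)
Your forward direction matches the paper. For the reverse direction, however, your plan has a genuine gap and differs substantially from the paper's argument.

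The reduction you describe is correct as far as it goes: with $M=[\onevec,V]B$ a rank factorization and $Q=\convexhull(\rows V)$, showing $M\in\mathcal{S}_Q$ is equivalent to showing that the columns of $B$ give an $\HH$-representation of $Q^h$, equivalently that $M$ satisfies the CCGC. But your proposed route to CCGC is to show that each individual column $B_{\cdot,j}$ defines a facet of $Q$. Even granting this, you would not be done: knowing that each $B_{\cdot,j}$ cuts out a facet does not by itself imply that these are \emph{all} the facets, i.e.\ that the intersection of the corresponding half-spaces equals $Q$ rather than something strictly larger. You never address this second step, and it is not a formality. The rank-lower-bound obstacle you highlight is real, but resolving it would still leave you short of the conclusion. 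Your suggested remedies (induction on dimension, or a joint analysis of sparsity, nonnegativity, and rank) are not spelled out, and it is far from clear how either would establish that $Q$ has no additional facets beyond those coming from $B$.

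The paper takes a completely different, topological route that avoids these combinatorial difficulties. It works directly in $\RR^q$, setting $P'=\convexhull(\rows M)$ and $Q'=\aff(\rows M)\cap\RR^q_+$; by Corollary~\ref{cor:sm poly aff} it suffices to show $P'=Q'$. For this the paper shows that the union $F=\bigcup_j F_j$ of the ``candidate facets'' $F_j=\convexhull\{M_i:M_{ij}=0\}$ covers all of $\partial Q'$. The key is the nerve theorem: the nerve of $\{F_j\}$ coincides with the nerve of the corresponding faces $\{G_j\}$ of the polytope $R$ whose incidence matrix is $\inc{M}$, and since all intersections are convex (hence contractible), $F$ is homotopy equivalent to $\partial R$, a $(d-1)$-sphere. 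A $(d-1)$-sphere cannot sit as a proper subset of the connected $(d-1)$-manifold $\partial Q'$, so $F=\partial Q'$ and $Q'=\convexhull(F)\subseteq P'$. This homotopy argument is precisely what bridges the purely combinatorial incidence hypothesis to the geometric equality, and your proposal has no analogue of it.
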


\begin{proof} If~$M$ is a slack matrix of a polytope~$P$, then $\onevec$ is contained in the column span of~$M$ (Theorem~\ref{thm:sm poly}), and by Theorem~\ref{thm:P iso conv rows S}, $\dim(P)=\rank(M)-1$.
	
	In order to establish the non-trivial implication of the claim, let $M\in\RR_+^{p\times q}$ be a nonnegative matrix with $\rank(M) = d+1 \ge 2$, $\onevec \in M \cdot \RR^q$ and $\inc{M}$ an incidence matrix of some $d$-dimensional polytope~$R$. Denote by $V\subseteq\RR_+^q$ the set of rows of~$M$ and define the polytope $P :=\convexhull(V)$ and the polyhedron 
	$Q:=\aff(V)\cap\RR_+^q$.
	Clearly, $P\subseteq Q$, and since $\onevec \in M \cdot \RR^q$, $\dim(Q)=\dim(P)=d$.
	By Corollary~\ref{cor:sm poly aff}, in order to show that $M$ is a slack matrix of a polytope, it suffices to prove $P=Q$.
	
	In order to establish $Q\subseteq P$, let us define
	\begin{equation*}
		V_i=\setDef{v\in V }{v_i=0}
		\quad\text{and}\quad
		F_i=\convexhull(V_i)
		\quad\text{for }1\le i\le q\,.
	\end{equation*}
	The set 
	\begin{equation*}
		F=\bigcup_{i=1}^q F_i
	\end{equation*}
	 is contained in the relative boundary $\partial{Q}$ of~$Q$. Note that as an incidence matrix of some polytope of dimension at least one, $M_{\text{inc}}$ does not have an all-ones column.  Since $Q=\convexhull(\partial Q)$ (note that~$Q$ is a \emph{pointed} polyhedron of dimension  $d\ge 2$, which is important here in case of~$Q$ being unbounded), if we show that $F= \partial Q$, then we will have that $Q = \convexhull(F)\subseteq P$. 

	Thus, our goal is to establish $F=\partial Q$. As mentioned above, we have $F\subseteq\partial Q$. It suffices to show that $F$ is homotopy-equivalent to a $(d-1)$-dimensional sphere\footnote{Our proof of this is inspired by~\cite{JZ04}.}, because then~$F$ cannot be \emph{properly} contained in the $(d-1)$-dimensional connected (recall $\dim(Q)\ge 2$) manifold $\partial Q$. This follows, e.g., from \cite[Cor.~8.5]{Bre93} together with the fact that the $(d-1)$-st cohomology group of a $(d-1)$-dimensional sphere is non-trivial.
	
	To show that $F$ is homotopy-equivalent to a $(d-1)$-dimensional sphere, observe that for every subset $I\subseteq\{1,\dots,q\}$, we have $\cap_{i\in I}F_i\ne\varnothing$ if and only if the submatrix of $\inc{M}$ formed by the columns indexed by~$I$ has an all-ones row.
%(note that a point $w$ in the relative interior of the convex hull of $W\subseteq V$ is contained in $\cap_{i\in I}F_i$ if and only if we have $W\subseteq F_i$ for all~$i\in I$, thus $\cap_{i\in I}F_i\ne\varnothing$ is equivalent to $\cap_{i\in I}V_i\ne\varnothing$).
Now let $R$ be a polytope of which $\inc{M}$ is an incidence matrix.  Let $G_1,\dots,G_q$ be the faces of $R$ that correspond to the columns of $\inc{M}$.  Then $\cap_{i\in I}G_i\ne\varnothing$ holds if and only if the submatrix of $\inc{M}$ formed by the columns indexed by~$I$ has an all-ones row. 

 Therefore, the abstract simplicial complexes
		\begin{equation*}
			\setDef{I\subseteq\{1,\dots,q\}}{\bigcap_{i\in I}F_i\ne\varnothing}, \,\,\textup{ and } \,\,
			\setDef{I\subseteq\{1,\dots,q\}}{\bigcap_{i\in I}G_i\ne\varnothing}
		\end{equation*}
		(known as the \emph{nerves} of the polyhedral complexes induced by $F_1,\dots,F_q$  and by $G_1,\dots,G_q$, respectively) are identical. Since all intersections $\bigcap_{i\in I}F_i$ and $\bigcap_{i\in I}G_i$ are contractible (in fact, they are even convex), this simplicial complex is homotopy equivalent to both~$F$ and to the $(d-1)$-dimensional (polyhedral) sphere~$\partial{R}$ (see, e.g., \cite[Thm.~10.6]{Bjo95}).
	\end{proof}

Since polygons have a very simple combinatorial structure, Theorem~\ref{thm:incidences_polytopes} readily yields a simple characterization of their slack-matrices. Here, a \emph{vertex-facet slack matrix} of a polytope~$P$ is a slack matrix of~$P$ whose rows and  columns are in one-to-one correspondence with the vertices and facets of~$P$, respectively.

\begin{corollary}
	A matrix~$M\in\RR_+^{n\times n}$  ($n\ge 3$) is a vertex-facet slack matrix of an $n$-gon if and only if its rows span an affine space of dimension exactly two and its rows and columns can be permuted such that the non-zero entries appear exactly at the positions $(i,i)$ (for $1\le i\le n$), and $(i,i-1)$ (for $2\le i\le n$), and $(1,n)$.
\end{corollary}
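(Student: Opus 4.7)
The plan is to derive this from Theorem~\ref{thm:incidences_polytopes} by identifying the stated pattern as a cyclic labeling of the vertex-facet incidences of an $n$-gon. Specifically, in any $n$-gon each vertex lies on exactly two facets (the two edges meeting at it), so after labeling vertices $1,\dots,n$ cyclically and facets so that facet $i$ is the edge joining vertex $i$ to vertex $i{+}1 \pmod n$, vertex $i$ lies on facets $i-1$ and $i$ (mod $n$). Hence $\inc{M}$ has its $1$'s exactly at $\{(i,i):1\le i\le n\}\cup\{(i,i-1):2\le i\le n\}\cup\{(1,n)\}$, and the pattern in the corollary identifies $\inc{M}$ as the incidence matrix of an $n$-gon.

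For the forward direction I would take $M\in\mathcal{S}_P$ with $P$ an $n$-gon, rows indexed by vertices and columns by facets. Theorem~\ref{thm:P iso conv rows S} gives $\rank(M)=\dim(P)+1=3$, and Theorem~\ref{thm:sm poly} gives $\onevec\in M\cdot\RR^q$; fixing $c$ with $Mc=\onevec$ confines the rows to a hyperplane inside the three-dimensional row span, so $\dim\aff(\rows{M})=2$. Permuting rows and columns according to the cyclic labeling above then realizes the stated pattern.

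For the converse I would assume $M$ has the prescribed pattern (after permutation) and $\dim\aff(\rows{M})=2$. The key technical step is the rank lower bound: the $3\times 3$ submatrix of $M$ on rows and columns $\{1,2,3\}$ has the anti-triangular shape
\[
\begin{pmatrix} 0 & a & b \\ 0 & 0 & c \\ d & 0 & 0 \end{pmatrix}, \qquad a,b,c,d>0,
\]
with determinant $acd\ne 0$, forcing $\rank(M)\ge 3$. Since $\dim\aff(\rows{M})$ equals either $\rank(M)$ (when $\zerovec\in\aff(\rows{M})$) or $\rank(M)-1$ (otherwise), the hypothesis pins down $\rank(M)=3$ and $\zerovec\notin\aff(\rows{M})$; hence $\aff(\rows{M})$ is a hyperplane of the row span missing the origin, yielding $c\in\RR^q$ with $Mc=\onevec$. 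Theorem~\ref{thm:incidences_polytopes} now produces a two-dimensional polytope whose incidence matrix is that of an $n$-gon, which must itself be an $n$-gon; since $M$ is $n\times n$ it is a vertex-facet slack matrix.

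The main obstacle is just the rank lower bound; everything else is bookkeeping with Theorems~\ref{thm:P iso conv rows S}, \ref{thm:sm poly}, and \ref{thm:incidences_polytopes}.
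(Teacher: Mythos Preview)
Your argument is correct and is precisely the elaboration the paper intends: the corollary is stated immediately after Theorem~\ref{thm:incidences_polytopes} with no proof beyond ``readily yields,'' and your use of Theorems~\ref{thm:P iso conv rows S}, \ref{thm:sm poly}, and~\ref{thm:incidences_polytopes} is the expected route.

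Two small remarks. First, the corollary as printed says the \emph{non-zero} entries sit at the listed $2n$ positions, which is impossible for $n\ge 5$ (each row of a vertex--facet slack matrix of an $n$-gon has $n-2$ non-zeros, not two); you have silently---and correctly---read the pattern as the support of $\inc{M}$, i.e.\ the \emph{zero} positions of $M$. Second, in your $3\times3$ block the entry $b=M_{1,3}$ vanishes when $n=3$, so the claim $b>0$ fails there; but since the determinant is $acd$ regardless of $b$, the conclusion $\rank(M)\ge 3$ is unaffected. Finally, the closing clause (``since $M$ is $n\times n$ it is a vertex--facet slack matrix'') could use one more line: once $M$ is a slack matrix of a polygon $P$, the fact that every row and every column has exactly two zeros with pairwise distinct zero-patterns forces each row to be a distinct vertex of $P$ and each column a distinct facet inequality, so $P$ is an $n$-gon and $M$ is vertex--facet.
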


Steinitz' theorem~\cite{SteinitzRademacher} says that a graph~$G$ is the $1$-skeleton of a three-dimensional polytope if and only if~$G$ is planar and three-connected. Using this, one can check in polynomial time whether a given 0/1-matrix is an incidence matrix of a three-dimensional polytope. For every fixed $d\ge 4$, however, it is NP-hard to decide whether a given 0/1-matrix is an incidence matrix of a $d$-dimensional polytope~\cite{RichterGebert}.

In the following combinatorial characterization of slack matrices of cones we restrict our attention to matrices of rank at least two as for polytopes. Clearly, every nonnegative matrix of rank one is a slack matrix of the  ray $\RR_+^1$, and, we may consider a matrix of rank zero as a slack matrix of the trivial cone $\{0\}$ in $\RR^0$.

\begin{theorem}\label{thm:incidences_cones}
	A nonnegative matrix $M$ with $\rank(M)\ge 2$ is a slack matrix of a polyhedral cone if and only if $\inc{M}$ is an incidence matrix of some $\rank(M)$-dimensional pointed polyhedral cone.
\end{theorem}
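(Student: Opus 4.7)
The forward implication follows almost immediately from earlier results: if $M \in \mathcal{S}_K$ for some polyhedral cone~$K$, then Lemma~\ref{lem:sm cone iff sm pointed cone} yields a pointed cone~$K'$ with $M \in \mathcal{S}_{K'}$, and Lemma~\ref{lem:pointed rank} gives $\dim(K') = \rank(M)$; by definition $\inc{M}$ is then an incidence matrix of the $\rank(M)$-dimensional pointed cone~$K'$.

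For the reverse implication, the plan is to reduce to the already proved Theorem~\ref{thm:incidences_polytopes}. Suppose $\inc{M}$ is the incidence matrix of some $d$-dimensional pointed cone~$K$, where $d = \rank(M) \ge 2$. The key geometric step will be to slice $K$ by a hyperplane $H = \{x : c^T x = 1\}$ for some $c \in \textup{int}(K^{\star})$, producing a polytope $P := K \cap H$ of dimension $d-1$. Given a slack matrix $N = AB \in \mathcal{S}_K$ with $\inc{N} = \inc{M}$, rescaling each nonzero row $a_i$ of~$A$ by $1/(a_i^T c) > 0$ yields a $\VV$-representation of~$P$ whose associated slack matrix is $DN$ for a positive diagonal matrix~$D$. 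Since positive diagonal scaling preserves zero-patterns, $\inc{M} = \inc{N} = \inc{DN}$ is an incidence matrix of the $(d-1)$-dimensional polytope~$P$.

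It remains to arrange that Theorem~\ref{thm:incidences_polytopes} applies to $M$ itself. Using Lemma~\ref{lem:sm add rem rows} we first delete any zero rows of~$M$ (restoring them at the end). The column $M\onevec$ is then strictly positive, so setting $D' := \textup{diag}(1/(M\onevec)_1, \ldots, 1/(M\onevec)_p)$ gives $M' := D'M$ with $M'\onevec = \onevec$, placing $\onevec$ in the column span of~$M'$; since $M'$ differs from $M$ by positive row scaling, $\inc{M'} = \inc{M}$ and $\rank(M') = d$, so $\inc{M'}$ is an incidence matrix of a $(\rank(M')-1)$-dimensional polytope. Theorem~\ref{thm:incidences_polytopes} then yields that $M'$ is a slack matrix of a polytope, hence also of its homogenization cone, and undoing the row scaling --- which preserves the property of being a slack matrix of a cone, per the remarks at the start of Subsection~\ref{subsec:cones} --- transfers this back to~$M$. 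The main obstacle I foresee is verifying that slicing $K$ by $H$ really only rescales the slack matrix by positive row factors (so that $\inc{M}$ is carried from~$K$ to~$P$); pointedness of $K$ is also used here to guarantee that $\textup{int}(K^{\star})$ is nonempty and that $\dim(K) = \rank(M)$ matches the dimension hypothesis of Theorem~\ref{thm:incidences_polytopes}.
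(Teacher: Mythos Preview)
Your approach is essentially the same as the paper's: both directions invoke the same lemmas, and the reverse implication proceeds by slicing the pointed cone to transfer the incidence structure to a $(\rank(M)-1)$-dimensional polytope, scaling rows of~$M$ to force~$\onevec$ into the column span, invoking Theorem~\ref{thm:incidences_polytopes}, and undoing the scaling. The paper is terser (it simply asserts that the slice has identical incidence structure, whereas you spell out the rescaling by $1/(a_i^Tc)$), but the logic is the same.

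One organizational wrinkle: you carry out the slicing argument \emph{before} deleting zero rows, but if $M$ has a zero row then $\inc{M}$ has an all-ones row, and no $(d-1)$-dimensional polytope (with $d\ge 2$) admits a $\VV$-point lying on every inequality hyperplane---so your sentence ``$\inc{M}=\inc{N}=\inc{DN}$ is an incidence matrix of the $(d-1)$-dimensional polytope~$P$'' fails in that case. The paper avoids this by removing zero rows \emph{first} (via the results of Section~\ref{subsec:further results}) and only then slicing; if you swap the order of your second and third paragraphs the argument goes through cleanly.
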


\begin{proof}
	If~$M$  is a slack matrix of some polyhedral cone then,  by Lemma~\ref{lem:sm cone iff sm pointed cone}, $M$ is a slack matrix (and hence $\inc{M}$ is an incidence matrix) of a pointed  polyhedral cone~$K$. By Lemma~\ref{lem:pointed rank} this cone has dimension $\rank(M)$.
	
	In order to prove the reverse implication, we can assume by the results in Section 2.3 that~$M$ does not have any zero-row. Since $M$ is also nonnegative, there exists a positive diagonal matrix $D$ such that $DM$ contains $\onevec$ in its column span.

Given a pointed cone $K$, we can slice $K$ by an affine hyperplane $L$ such that the slice is a polytope of dimension $\dim(K)-1$ and the incidence structures of $K$ and $K\cap L$ are identical.  Thus, $\inc{(DM)}$ is an incidence matrix of some $(\rank(M)-1)$-dimensional polytope.  By Theorem~\ref{thm:incidences_polytopes}, we have that $DM$ is a slack matrix of a polytope.  Hence, $M$ is a slack matrix of the homogenization cone of this polytope.
\end{proof}

Note that dropping \emph{pointed} from the formulation of Theorem~\ref{thm:incidences_cones} makes the statement false. Indeed, 
\begin{equation*}
	M=
	\left[
	\begin{array}{cc}
		1 & 2 \\
		2 & 1 \\
		0 & 0 \\
		0 & 0
	\end{array}
	\right]
	\quad\text{with}\quad
	\inc{M}=
	\left[
	\begin{array}{cc}
		0 & 0 \\
		0 & 0 \\
		1 & 1 \\
		1 & 1
	\end{array}
	\right]
\end{equation*} 
and $\rank(M)=2$ is not a slack matrix (since~$M$ does not satisfy the RCGC), but $\inc{M}$ is the incidence matrix of the non-pointed cone $\{(x_1,x_2):x_2\ge 0\}$ with $\mathcal{V}$-representation $(0,1)$, $(0,1)$, $(1,0)$, $(-1,0)$ and $\mathcal{H}$-representation $(0,1)$, $(0,1)$.

\bibliographystyle{plain}

\end{document}